\numberwithin{equation}{section}
\theoremstyle{plain}
\newtheorem{Theorem}{Theorem}[section]
\newtheorem{Proposition}[Theorem]{Proposition}
\theoremstyle{definition}
\newtheorem{Definition}[Theorem]{Definition}
\theoremstyle{remark}
\newtheorem{Remark}{{\bf Remark}}
\begin{document}
\title{Embedding and the rotational dimension of a graph containing a clique}

\author{Takumi Gomyou} 
\thanks{Graduate School of Mathematics, Nagoya University,
Chikusa-ku, Nagoya 464-8602, Japan, d16001m@math.nagoya-u.ac.jp}

\maketitle

\begin{abstract}
The rotational dimension is a minor monotone graph invariant related to the dimension of an Euclidean space containing a spectral embedding corresponding to the first nonzero eigenvalue of the graph Laplacian, which is introduced by G\"{o}ring, Helmberg and Wappler.
In this paper, we study rotational dimensions of graphs which contain large complete graphs.
The complete graph is characterized by its rotational dimension.
It will be a obtained that a chordal graph may be made large while keeping the rotational dimension constant.
\end{abstract}

\section{Introduction}
\label{intro}
Let $G=(V,E)$ be an undirected, simple, connected and finite graph, where $V = \{ 1, \cdots, n \}$ is the vertex set, $E = \{ ij \mid i \in V \ \mbox{is adjacent to} \ j \in V \}$ is the edge set.
Setting a vertex weight $s: V \to \mathbb{R}_{\geq 0}$ and an edge length $l: E \to \mathbb{R}_{\geq 0}$, G\"{o}ring, Helmberg and Wappler \cite{Goring2} introduced an optimization problem which is to maximize $\sum_{i \in V} s(i) \| \bm{v}(i) \|^2$ over all embedding map $\bm{v}$ subjected to the {\it equilibrium constraint} $\| \sum_{i \in V} s(i) \bm{v}(i) \|^2 = 0$ and the {\it distance constraints} $\| \bm{v}(i) -\bm{v}(j) \| \leq l(ij), \forall ij \in E$. 
This optimization problem is formulated as the following: 
\begin{equation}\label{emb_opt}
\left.
\begin{aligned}
  \mathrm{maximize} \quad & \sum_{i \in V} s(i) \| \bm{v}(i) \|^2 \\
  \mathrm{subject \ to} \quad & \| \sum_{i \in V} s(i) \bm{v}(i) \|^2 = 0, \\
  & \| \bm{v}(i) -\bm{v}(j) \| \leq l(ij) , \quad \forall ij \in E, \\
  & \bm{v}: V \to \mathbb{R}^n.
\end{aligned}
\right.
\end{equation}
The problem \eqref{emb_opt} is related to some maximization of the first nonzero eigenvalue of the graph Laplacian.
The eigenvalue optimization is formulated to a semidefinite programming, and then the problem \eqref{emb_opt} is derived from the dual of that.  
The non-parameterized versions of the problems were initially studied in \cite{Goring}.
A map $\bm{v}: V \to \mathbb{R}^n$, in the problem \eqref{emb_opt}, is considered as an embedding of vertices in $n$ (the cardinality of the vertex set)-dimensional Euclidean space.
However, the dimension of an Euclidean space containing the image of an optimal embedding does not necessarily have to be full.
It is interesting to find the minimal dimension of optimal embeddings. 
The rotational dimension is an invariant of a graph defined as follows. 
\begin{Definition}[rotational dimension \cite{Goring2}]
\begin{align*}
\mathrm{rotdim}_G (s,l) &:= \mathrm{min} \{ \mathrm{dim \ span} \{ \bm{v}(i) \in \mathbb{R}^n \mid i \in V \} \mid \bm{v} \ \text{is optimal of} \, \eqref{emb_opt} \}, \\
\mathrm{rotdim} (G) &:= \mathrm{max} \{ \mathrm{rotdim}_G (s,l) \mid s: V \to \mathbb{N} \cup \{ 0 \}, l: E \to \mathbb{N} \cup \{ 0 \} \}.
\end{align*}
Here, $\mathrm{dim \ span} \{ \bm{v}(i) \in \mathbb{R}^n \mid i \in V \}$ is the dimension of the linear subspace spanned by $\bm{v}(1), \cdots , \bm{v}(n)$.
$\mathrm{rotdim} (G)$ is called the {\it rotational dimension} of $G$.
\end{Definition}
G\"{o}ring, Helmberg and Wappler defined the rotational dimension and showed that $s: V \to \mathbb{N} \cup \{ 0 \}$, $l: E \to \mathbb{N} \cup \{ 0 \}$ can be replaced with $s: V \to \mathbb{R}_{\geq 0}$, $l: E \to \mathbb{R}_{\geq 0}$ or $s: V \to \mathbb{R}_{>0}$, $l: E \to \mathbb{R}_{>0}$ in the definition of $\mathrm{rotdim} (G)$ \cite{Goring2}.
The rotational dimension is a minor-monotone graph invariant. 
Several other minor-monotone invariants or optimal embeddings related to the first nonzero eigenvalue of the Laplacian are also known, such as the Colin de Verdi\`{e}re number \cite{Holst2} or the valid representation \cite{Holst}.

We find optimal embeddings and rotational dimensions of graphs which contain large complete graphs.
For any graph $G$ on $n$ vertices we have $\mathrm{rotdim} (G) \leq n-1$ because an optimal embedding satisfies the equilibrium constraint.
The complete graph $K_n$ attains $\mathrm{rotdim} (G) = n-1$.
The main results is the characterization of the complete graph by its rotational dimension, that is, a graph $G$ on $n$ vertices is complete if and only if $\mathrm{rotdim} (G) = n-1$.
A similar result is known for the valid representation invariant \cite{Holst}.
To show the property, we determine the rotational dimension of a graph obtained by removing one edge from the complete graph $K_n$.
When the parameters $s$ and $l$ are set uniformly, an optimal embedding of this graph is found uniquely.
The vertices of a complete subgraph $K_{n-1}$ are mapped bijectively onto the vertices of an $(n-2)$-regular simplex.
Also for an arbitrary graph, complete subgraph $K_m$ is embedded similarly under a certain natural assumption. 

For any graph the rotational dimension is bounded above by the tree-width plus one \cite{Goring2}, and bounded below by the clique number minus one. 
If a graph is chordal, the bounds of the rotational dimension are tight.
Applying the properties of a chordal graph, we may make a chordal graph larger while keeping the rotational dimension constant.
For instance, we determine the rotational dimension of a graph that consists of 
a complete graph $K_m$ and the other $k$ vertices $m+1, \cdots , m+k$ such that each vertex of $K_m$ connects completely $m+1, \cdots , m+k$.

This paper is organized as follows.
In Section 2 we recall G\"{o}ring, Helmberg and Wappler's work on the optimization problems and the rotational dimension. 
In Section 3 we find an optimal embedding and the rotational dimension for a graph obtained by removing one edge from a complete graph.
In Section 4 we study the rotational dimension of a chordal graph.

\section{Optimization problems and rotational dimension}

In this section, we recall the work of G\"{o}ring, Helmberg and Wappler \cite{Goring2} (also \cite{Goring}).
They derived the graph embedding problem \eqref{emb_opt} from the maximization of the first nonzero eigenvalue of the Laplacian via the semidefinite duality \cite{BoydVandenberghe}.
It can be confirmed that the spectral embedding which corresponds to the first nonzero eigenvalue is an optimal solution to \eqref{emb_opt}.
Then the dimension of an optimal embedding is bounded by the multiplicity of the first nonzero eigenvalue.
G\"{o}ring, et al. also gave the tree-width bound, which is applied to the rotational dimension, and presented some other properties of the rotational dimension.
Throughout this section, except where mentioned, vertex weight and edge length are all positive, that is, $s: V \to \mathbb{R}_{>0}$ and $l: E \to \mathbb{R}_{>0}$.

\subsection{Duality between maximization of the first nonzero eigenvalue and graph embedding problem}
We take an edge weight $w: E \to \mathbb{R}_{\geq 0}$.
Let $E_{ij}$ be an $n \times n$ symmetric matrix whose $ii$ and $jj$ components are $1$, $ij$ and $ji$ components are $-1$, and all other components are zero.
With a diagonal matrix $D = \mathrm{diag}(s(1)^{-1/2}, \cdots, s(n)^{-1/2})$, we consider the Laplacian 
$$L(G;(w,s)) := D \left( \sum_{ij \in E} w(ij) E_{ij} \right) D$$
for a graph $G=(V,E)$. 
The Laplacian $L(G;(w,s))$ is positive semidefinite (denoted as $L(G;(w,s)) \succeq 0$), 
and it has exactly one eigenvalue $0$ if and only if the graph $G_w = (V, E_w := \{ ij \in E \mid w(ij)>0 \})$ is connected. 
The starting optimization problem is the maximization of the first nonzero eigenvalue $\lambda_1 (L(G;(w,s)))$: 
\begin{equation}\label{eigen_opt}
\left.
\begin{aligned}
  \widehat{a}(G;s,l) := \mathrm{maximize} \quad & \lambda_1 (L(G;(w,s))) \\
  \mathrm{subject \ to} \quad & \sum_{ij \in E} l(ij)^2 w(ij) \leq 1, \\
  & w: E \to \mathbb{R}_{\geq 0}.
\end{aligned}
\right.
\end{equation}
\begin{Remark}
In this optimization, an optimal solution $w^*$ lies on the boundary of the inequality  constraint $\sum_{ij \in E} l(ij)^2 w(ij) \leq 1$. 
Therefore, it can be replaced with the equality constraint.
When $s$ and $l$ are uniformly one: $s = {\bf 1}$ and $l = {\bf 1}$, the scaled problem 
\begin{equation}
\left.
\begin{aligned}
  |E| \, \widehat{a}(G;{\bf 1},{\bf 1}) = \mathrm{maximize} \quad & \lambda_1 (L(G;(w,{\bf 1}))) \\
  \mathrm{subject \ to} \quad & \sum_{ij \in E} w(ij) = |E|, \\
  & w: E \to \mathbb{R}_{\geq 0}
\end{aligned}
\right.
\end{equation}
was introduced by Fiedler \cite{Fiedler}.
He called the optimal value $\widehat{a} (G) := |E| \, \widehat{a}(G;{\bf 1},{\bf 1})$ the {\it absolute algebraic connectivity} of $G$.
\end{Remark}
The problem \eqref{eigen_opt} is rewritten in a standard form of a semidefinite programming:
\begin{equation}\label{eigen_opt2}
\left.
\begin{aligned}
  \frac{1}{\widehat{a}(G;s,l)} = \mathrm{minimize} \quad & \sum_{ij \in E} l(ij)^2 \widetilde{w}(ij) \\
  \mathrm{subject \ to} \quad & L(G;(\widetilde{w},s)) +\mu D^{-1} {\bf 1} \, {}^{t}{\bf 1} D^{-1} -I \succeq 0, \\
  & \widetilde{w}: E \to \mathbb{R}_{\geq 0}, \\
  & \mu \in \mathbb{R}.
\end{aligned}
\right.
\end{equation}
Here $\widetilde{w}$ is a scaled edge weight such that 
$$\widetilde{w} = \frac{w}{\lambda_1 (L(G;(w,s)))}.$$
The embedding problem \eqref{emb_opt} is formulated form the dual of \eqref{eigen_opt2}. 
There is no duality gap for this primal-dual pair, that is, the optimal value of \eqref{emb_opt} is equal to $1/\widehat{a}(G;s,l)$.
Now we assume that $\bm{v}$ and $w$ satisfy the constraints, respectively.
(An optimization variable satisfying the constraints is called a {\it feasible solution}.)
These are optimal respectively, if and only if, these satisfy
\begin{align}
& w(ij) \left( l(ij)^2 -\| \bm{v}(i) -\bm{v}(j) \|^2 \right) =0, \quad \forall ij \in E \quad \mathrm{and} \label{KKT1} \\ 
& \sum_{k:ik \in E} w(ik) \left( \bm{v}(i) -\bm{v}(k) \right) = \lambda_1 (L(G;(w,s))) \, s(i) \bm{v}(i), \quad \forall i \in V. \label{KKT2}
\end{align}
The conditions \eqref{KKT1} and \eqref{KKT2} are obtained from the {\it complementary slackness conditions}.
The second formula \eqref{KKT2} is equivalent to
\begin{equation*}
L(G;(w,s)) D^{-1} 
\begin{pmatrix}
{}^t \bm{v}(1) \\ 
\vdots \\
{}^t \bm{v}(n) 
\end{pmatrix}
= \lambda_1 (L(G;(w,s))) D^{-1} 
\begin{pmatrix}
{}^t \bm{v}(1) \\
\vdots \\
{}^t \bm{v}(n) 
\end{pmatrix}
.
\end{equation*}
This means that each column of $D^{-1} \, {}^{t} \left( \bm{v}(1) \cdots \bm{v}(n) \right)$ is an eigenvector corresponding to $\lambda_1 (L(G;(w,s)))$. 
Thus, the dimension of the linear subspace spanned by $\bm{v}(1), \cdots , \bm{v}(n)$ are less than the multiplicity of $\lambda_1 (L(G;(w,s)))$.
Especially, the rotational dimension is bounded above by the multiplicity of $\lambda_1 (L(G;(w,s)))$.

\subsection{Properties of rotational dimension}

In this subsection, we summarize the results of \cite{Goring, Goring2} for an optimal embedding and the rotational dimension.

The structural property of an optimal embedding is closely associated with a separator of a graph. 
\begin{Theorem}[Separator-Shadow \cite{Goring2}]\label{Separator-Shadow}
For a graph $G=(V,E)$ with parameters $s: V \to \mathbb{R}_{>0}$ and $l: E \to \mathbb{R}_{>0}$, let $\bm{v}: V \to \mathbb{R}^n$ and $w: E \to \mathbb{R}_{\geq 0}$ be optimal solutions, respectively. 
And let $S$ be a separator that divides the graph $G_w = (V, E_w)$ into two connected components $C_1 \subset G$ and $C_2 \subset G$.
Then for at least one of $C_1$ and $C_2$, say $C_1$,  
$$\mathrm{conv} \{ 0, \bm{v}(i) \} \cap \mathrm{conv} \{ \bm{v}(s) \mid s \in S \} \neq \emptyset , \quad \forall i \in V(C_1).$$
Here, $\mathrm{conv} \{ 0, \bm{v}(i) \}$ is a line segment connecting the origin and $\bm{v}(i)$, and $\mathrm{conv} \{ \bm{v}(s) \mid s \in S \}$ is the convex hull of the set $\{ \bm{v}(s) \mid s \in S \}$.
\end{Theorem}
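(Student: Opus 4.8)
\emph{Proof strategy.}
The plan is to recast the assertion as a statement about the ``shadow region'' $T:=\bigcup_{t\ge1}tP$ of $P:=\mathrm{conv}\{\bm v(s)\mid s\in S\}$, and to prove it using the local description of an optimal embedding carried by the complementary slackness condition \eqref{KKT2}. If $0\in P$ then $0\in\mathrm{conv}\{0,\bm v(i)\}\cap P$ for every $i\in V$, so both components work; hence assume $0\notin P$. A short check shows that $T$ is then closed and convex, that $tT\subseteq T$ for all $t\ge1$, and that $\mathrm{conv}\{0,x\}\cap P\neq\emptyset$ holds exactly when $x\in T$. So it suffices to prove that $V(C_1)\subseteq T$ or $V(C_2)\subseteq T$.

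The local ingredient is as follows. Put $d_i:=\sum_{k\colon ik\in E_w}w(ik)$ and $\lambda:=\lambda_1(L(G;(w,s)))$; then \eqref{KKT2} rearranges to $(d_i-\lambda s(i))\,\bm v(i)=d_i\,\bar q_i$, where $\bar q_i:=d_i^{-1}\sum_{k\colon ik\in E_w}w(ik)\bm v(k)$ lies in the convex hull of the images of the $w$-neighbours of $i$. Consequently, whenever $d_i>\lambda s(i)$ one has $\bm v(i)=\rho_i\bar q_i$ with $\rho_i=d_i/(d_i-\lambda s(i))>1$; thus $\bar q_i\in\mathrm{conv}\{0,\bm v(i)\}$, so $i$ lies in the shadow of the convex hull of its own $w$-neighbourhood, and moreover $\bm v(i)\in T$ as soon as $\bar q_i\in T$, because $tT\subseteq T$ for $t\ge1$.

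Now argue by contradiction: suppose $C_j':=V(C_j)\setminus T\neq\emptyset$ for both $j=1,2$. One first shows that at least one component, say $C_1$, is \emph{good}, i.e. $d_i>\lambda s(i)$ for all $i\in V(C_1)$. Given this, take any $i\in C_1'$; by the previous paragraph $\bar q_i\notin T$, and since $T$ is convex with $\bm v(s)\in P\subseteq T$ for all $s\in S$, the convex combination $\bar q_i$ of the images of the $w$-neighbours of $i$ can avoid $T$ only if one of those neighbours — necessarily lying in $V(C_1)$ rather than in $S$ — is itself in $C_1'$. Hence $C_1'$ spans a subgraph of $G_w$ of minimum degree at least one, while $G_w$ is connected and $V(C_1)$ is joined to $S$ in $G_w$; an extremal argument along $C_1'$ — comparing the norms $\|\bm v(i)\|$, $i\in C_1'$, with those of their $w$-neighbours, or tracking a linear functional constant on $\bm v(S)$, along which the identity $\bm v(i)=\rho_i\bar q_i$ propagates multiplicatively — then yields a contradiction. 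This would force $V(C_1)\subseteq T$.

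The heart of the proof — and, I expect, the main obstacle — is the selection of a good component, for which the equilibrium constraint must be used. The natural device is the rotational freedom of optimal embeddings: rotating $V(C_1)$ rigidly about the affine hull of $\{\bm v(s)\mid s\in S\}$ (a nontrivial such rotation exists since $|S|\le n-2$) leaves all edge lengths within $C_1\cup S$ unchanged and, after a compensating translation, remains feasible at least infinitesimally. Optimality of $\bm v$ then yields stationarity conditions which, unwound, force the weighted barycentres $\sum_{i\in V(C_j)}s(i)\bm v(i)$ into $\mathrm{span}\{\bm v(s)\mid s\in S\}$ and impose a sign relation between them that picks out the good component. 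Turning this barycentric information into the vertex-by-vertex statement — while handling tight edges between $C_1$ and $C_2$ (absent from $G_w$ but possibly present in $G$) and excluding a vertex with $d_i\le\lambda s(i)$ that could block the propagation of the third paragraph — is the part I expect to demand the most care; it should follow from a more careful interplay of the connectivity of $G_w$ with the variational inequalities above, though I have not checked all the details.
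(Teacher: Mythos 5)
First, note that the paper does not prove this theorem at all: it is quoted from G\"oring--Helmberg--Wappler (the result is Theorem~\ref{Separator-Shadow}, cited from \cite{Goring2}, originally proved in \cite{Goring}), so there is no in-paper proof to match your outline against; your proposal has to stand on its own, and as it stands it has a genuine structural flaw beyond the two steps you yourself flag as unchecked. Your paragraph~3 is designed to derive a contradiction from the following data alone: $C_1$ is ``good'' ($d_i>\lambda s(i)$ for all $i\in V(C_1)$), $C_1':=V(C_1)\setminus T\neq\emptyset$, the propagation $\bm{v}(i)\notin T\Rightarrow$ some $w$-neighbour of $i$ in $V(C_1)$ is outside $T$, and connectivity of $G_w$. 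Since this derivation never refers to $C_2$, if it worked it would prove that every good component lies in the shadow $T$. That statement is false. Take the path $P_4$ with $s=\bm{1}$, $l=\bm{1}$: the optimal embedding is $\bm{v}=(-\tfrac32,-\tfrac12,\tfrac12,\tfrac32)$ on a line with optimal weights $w_{12}=w_{34}=\tfrac{3}{10}$, $w_{23}=\tfrac{2}{5}$ and $\lambda_1=\tfrac15$ (the eigenvalues of this weighted Laplacian are $0,\tfrac15,\tfrac35,\tfrac65$). For the separator $S=\{2\}$ one has $P=\{\bm{v}(2)\}=\{-\tfrac12\}$, $0\notin P$, $T=\{x\le -\tfrac12\}$, and the component $\{3,4\}$ satisfies $d_3=\tfrac{7}{10}>\tfrac15$, $d_4=\tfrac{3}{10}>\tfrac15$, yet lies entirely outside $T$; the propagation condition is satisfied internally ($\bm{v}(4)$'s only neighbour is $\bm{v}(3)\notin T$, and $\bar q_3=\tfrac{5}{14}\notin T$ with $\bm{v}(3)=\tfrac75\bar q_3$), so no contradiction of the kind you describe can be extracted. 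In other words, ``goodness'' plus an intra-component extremal argument cannot yield $V(C_1)\subseteq T$; the whole content of the theorem is the coupling between the two components (they cannot \emph{both} escape the shadow), and that coupling is exactly what your outline postpones.

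The selection step is also not secured. Rotating the points of one component rigidly about the affine hull of $\{\bm{v}(s)\mid s\in S\}$ preserves the distance constraints inside $C_1\cup S$, but since that affine hull need not contain the origin, it changes both the objective $\sum_i s(i)\|\bm{v}(i)\|^2$ and the equilibrium constraint, and a ``compensating translation'' changes all distances to the other component's images only trivially but destroys the norms again; so ``remains feasible at least infinitesimally'' and the asserted consequences (barycentres of both components in $\mathrm{span}\{\bm{v}(s)\}$, a sign relation selecting a component) are not established --- and, by the example above, even if a component with $d_i>\lambda s(i)$ for all $i$ were selected, that property is not the right invariant. What is sound in your proposal is the preprocessing (the trivial case $0\in P$, the convexity and closedness of $T=\bigcup_{t\ge1}tP$, and the identity $(d_i-\lambda s(i))\bm{v}(i)=d_i\bar q_i$ from \eqref{KKT2} with its one-step shadow propagation when $d_i>\lambda s(i)$); but the heart of the theorem remains unproved, so the proposal should be regarded as an incomplete sketch with a misdirected main step rather than a proof.
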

Note that any separator of a graph $G=(V,E)$ is also a separator of $G_w = (V, E_w)$.
If we regard the origin as a light source and the convex hull of the separator's points as a solid body, then all the vertices in at least one component are mapped at the shadow of the separator.
Thus, this theorem was named Separator-Shadow Theorem.
By this theorem, the dimension of the subspace spanned by $S$ and one component which not contained in the shadow of $S$ attains the dimension for the whole vertex.

One of main results by G\"{o}ring, et al. is the tree-width bound on the rotational dimension.
To state that, we recall the definitions of a tree-decomposition and the tree-width.
\begin{Definition}
A {\it tree-decomposition} of a graph $G = (V,E)$ is a tree $T$ whose vertex set $V(T)$ is a family of subsets of $V$, satisfying the following properties.
\begin{itemize}
\item[(i)] $V= \underset{U \in V(T)}{\cup} U$, 
\item[(ii)] for any edge $ij \in E$ there exists $U \in V(T)$ such that $i,j \in U$, 
\item[(iii)] if $U_1 , U_2 , U_3 \in V(T)$ and the path between $U_1$ and $U_2$ contains $U_3$, then $U_3 \supset U_1 \cap U_2$.
\end{itemize}
The {\it width} of a tree-decomposition is the cardinality of the largest size subset of $V$ minus $1$, and the {\it tree-width} $\mathrm{tw} (G)$ of a graph $G$ is the minimum width over all tree-decompositions of $G$.
\end{Definition}
\begin{Theorem}[\cite{Goring2}]\label{rotdim-twbound} 
For any graph $G$ and any parameters $s$, $l$ there exists an optimal embedding whose dimension is less than or equal to the tree-width of $G$ plus one.
\end{Theorem}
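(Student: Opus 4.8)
The plan is to reduce to the chordal case and then induct on a clique tree, using the Separator–Shadow Theorem (Theorem~\ref{Separator-Shadow}) as the device that confines one branch of the decomposition to a low‑dimensional subspace. A tree‑decomposition of $G$ of width $k:=\mathrm{tw}(G)$ produces a chordal supergraph $\widehat G$ on the same vertex set (make every bag a clique) with $\mathrm{tw}(\widehat G)=k$. Since $G$ arises from $\widehat G$ by deleting edges, $G$ is a minor of $\widehat G$, so by minor‑monotonicity of the rotational dimension it suffices to prove the bound when $G$ itself is chordal; then $G$ has a clique tree, a tree‑decomposition whose bags are the maximal cliques, of width $\omega(G)-1=k$, in which any two adjacent bags meet in a clique that is a minimal separator.

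Now argue by induction on the number of maximal cliques. If $G$ is complete, then $|V|\le k+1$ and any optimal embedding — one exists because the primal-dual pair \eqref{emb_opt}/\eqref{eigen_opt2} has no duality gap — spans at most $|V|-1\le k$ dimensions. Otherwise pick a leaf clique $L$ of the clique tree, its neighbour clique $N$, and the separator $S:=L\cap N\subsetneq L$, so $|S|\le k$; $S$ separates $A:=L\setminus S$ from $B:=V\setminus L$, both nonempty. Fix optimal solutions $\bm v$ and $w$ of \eqref{emb_opt} and \eqref{eigen_opt2}, and apply Theorem~\ref{Separator-Shadow} to $S$ in $G_w$ after grouping the components of $G_w-S$ into those inside $A$ and those inside $B$. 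We may assume that one of the two sides is entirely contained in the cone over $\mathrm{conv}\{\bm v(s)\mid s\in S\}$; call this side $C_1$ and the other side $C_2$. Then $\bm v(C_1\cup S)$ spans the subspace $W_S:=\mathrm{span}\{\bm v(s)\mid s\in S\}$, of dimension at most $|S|\le k$, and it remains only to re‑embed $C_2$ inside a $(k+1)$‑dimensional subspace containing $W_S$ without losing optimality.

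Two points arise here, and the second is the genuine obstacle. First, the equilibrium constraint $\|\sum_i s(i)\bm v(i)\|=0$ does not pass to subgraphs, so I would switch to the translation‑invariant energy form of the problem: maximizing $\sum_i s(i)\|\bm v(i)\|^2$ subject to equilibrium equals maximizing $\tfrac1\sigma\sum_{i<j}s(i)s(j)\|\bm v(i)-\bm v(j)\|^2$ (with $\sigma=\sum_i s(i)$) over all distance‑feasible embeddings, re‑centering only at the end. Second, since there are no $A$–$B$ edges, the cross pairs $\|\bm v(i)-\bm v(j)\|$ with $i\in C_1$, $j\in C_2$ enter the energy unconstrained, so the plain restriction of $\bm v$ to $C_2\cup S$ is not optimal for the naive subproblem. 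The remedy is that, holding $C_1$ and $S$ fixed, the energy is, up to an additive constant, $\sum_{\{i,j\}\subseteq C_2\cup S}s(i)s(j)\|\bm u(i)-\bm u(j)\|^2+\sigma_1\sum_{j\in C_2}s(j)\|\bm u(j)-\bm b_1\|^2$, where $\sigma_1=\sum_{i\in C_1}s(i)$ and $\bm b_1\in W_S$ is the weighted centroid of $\bm v(C_1)$; that is, all of $C_1$ can be replaced by a single weighted point $\bm b_1$ lying in $W_S$. For this pinned subproblem on $G[C_2\cup S]$ — a graph of tree‑width $\le k$ — the restriction of $\bm v$ is optimal, and by the inductive hypothesis, taken in the slightly stronger form that also allows a bounded configuration of pinned points inside the span of the pinned separator, it has an optimum of dimension $\le k+1$. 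Rotating that optimum so that it agrees with $\bm v$ on $S$ and combining it with the unchanged, already $(\le k)$‑dimensional image of $C_1$ should yield an optimal embedding of $G$ of dimension $\le k+1$. The hard part is exactly this closing step: setting up the pinned subproblem and the inductive statement so that the recursion is self‑sustaining, and verifying that the recursively obtained low‑dimensional optimum can be consistently glued back across $W_S$ — here the reflection identity, that sending $\bm v(i)$ to its mirror image across $W_S$ for $i\in C_2$ preserves every distance constraint and the entire energy (because $C_1\cup S\subseteq W_S$), is what makes the gluing legitimate.
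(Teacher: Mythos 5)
First, note that this paper does not prove Theorem~\ref{rotdim-twbound} at all: it is imported from \cite{Goring2}, so there is no in-paper proof to match your sketch against; it has to stand on its own, and as you yourself concede in the last sentence, it does not yet close. The genuine gap is the ``pinned subproblem'' induction. The strengthened statement you need --- that an optimization problem of the form \eqref{emb_opt} augmented by a fixed weighted point $\bm{b}_1$ (and by the pinned positions of $S$) admits an optimal solution whose span exceeds $\mathrm{span}\{\bm{v}(s)\mid s\in S\}$ by at most one dimension --- is a different optimization problem from \eqref{emb_opt}. Theorem~\ref{Separator-Shadow}, the semidefinite duality, and the complementarity conditions \eqref{KKT1}--\eqref{KKT2} are only available for \eqref{emb_opt} itself, so your main tool is not known to apply at the next level of the recursion; moreover at deeper levels the subproblems acquire further pinned points, and nothing in the sketch controls their number or shows they can be merged, so the recursion is not self-sustaining as stated. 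Until that inductive statement is formulated and proved (including the gluing step, where optimality of the recombined embedding for the original problem must be verified), the argument is a plan rather than a proof.

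There is a second, more local gap: the degenerate case $0\in\mathrm{conv}\{\bm{v}(s)\mid s\in S\}$. Your step ``one of the two sides is entirely contained in the cone over $\mathrm{conv}\{\bm{v}(s)\mid s\in S\}$, hence $\bm{v}(C_1\cup S)$ spans (read: is contained in) $W_S$'' needs the segment $\mathrm{conv}\{0,\bm{v}(i)\}$ to meet $\mathrm{conv}\{\bm{v}(s)\mid s\in S\}$ at a \emph{nonzero} point; when the origin lies in that convex hull the Separator--Shadow conclusion is satisfied trivially by every vertex and yields no containment whatsoever. This case is exactly where the theorem has content: for a star (tree-width $1$) the separator is a single vertex mapped to the origin, and there exist optimal embeddings of large dimension (hub at $0$, leaves at unit vectors summing to zero and spanning many dimensions), so in this case one genuinely must construct a new optimal embedding rather than read off a span bound from the given one. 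The paper's own proof of Theorem~\ref{edge complete} has to treat $0\in\mathrm{conv}\{\bm{v}(i)\mid i\in S\}$ as a separate case and rescue it with the equilibrium constraint; your sketch never addresses it. Two smaller remarks: in the nondegenerate case where the shadow side is $V\setminus L$ you are already done ($\mathrm{span}\{\bm{v}(i)\mid i\in V\}\subseteq\mathrm{span}\{\bm{v}(i)\mid i\in L\}$, of dimension at most $\mathrm{tw}(G)+1$) with no re-embedding, so the heavy machinery is needed only when the shadow side is the leaf; and the reduction to chordal supergraphs via Theorem~\ref{minor rotdim} bounds $\mathrm{rotdim}(G)$, which yields the stated ``for any parameters $s,l$'' form only after invoking the parameter-range equivalence of \cite{Goring2} ($\mathrm{rotdim}_G(s,l)\le\mathrm{rotdim}(G)$ for real-valued parameters), a point worth making explicit.
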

It is clearly to see that the rotational dimension has the same bound.

Finally, we recall some facts about the rotational dimension.
The rotational dimension is monotone under taking minors. 
\begin{Definition}
A graph $G'$ obtained by repeating three operations $(i)$ deletion of isolated vertex, \ $(ii)$ deletion of edge, \ $(iii)$ contraction of edge from $G$, is called a {\it minor} of $G$.
Then we write $G \succeq G'$.
\end{Definition}
\begin{Theorem}[minor monotonicity of rotational dimension \cite{Goring2}]
\label{minor rotdim}
$$G \succeq G' \Rightarrow  \mathrm{rotdim} (G) \geq \mathrm{rotdim} (G').$$
\end{Theorem}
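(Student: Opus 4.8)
The plan is to prove the inequality for each of the three elementary operations in the definition of a minor; since $\geq$ is transitive, this gives the statement for an arbitrary minor. I will use the (equivalent) formulation of $\mathrm{rotdim}$ with parameters valued in $\mathbb{R}_{\geq 0}$, and I adopt the standing convention that all graphs under consideration are connected, so that \eqref{emb_opt} has a nonempty optimal set whenever the parameters are positive; in particular operation $(ii)$ (edge deletion) is applied only to a non-bridge edge. In each case, given $G$, an elementary operation producing $G'$, and parameters $(s',l')$ on $G'$ with $\mathrm{rotdim}_{G'}(s',l')=\mathrm{rotdim}(G')$ (which may be taken strictly positive by the cited result of G\"oring--Helmberg--Wappler), I will exhibit parameters $(s,l)$ on $G$ with $\mathrm{rotdim}_G(s,l)\geq\mathrm{rotdim}_{G'}(s',l')$; the definition of $\mathrm{rotdim}(G)$ as a maximum over parameters then completes the argument.

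For deletion of an isolated vertex $v$, take $s(v)=0$ and inherit all other parameters from $(s',l')$; then $v$ is decoupled from \eqref{emb_opt} for $G$, so every optimal embedding of $G$ restricts to an optimal embedding of $G'$ while its span cannot shrink, giving $\mathrm{rotdim}_G(s,l)\geq\mathrm{rotdim}_{G'}(s',l')$. For deletion of an edge $e=ab$, fix an optimal embedding $\bm{v}^{*}$ of $G'$, set $l(e)$ to be any integer at least $\|\bm{v}^{*}(a)-\bm{v}^{*}(b)\|$, and inherit the rest; since \eqref{emb_opt} for $G$ has the same objective and one extra constraint that is slack at $\bm{v}^{*}$, the two optimal values agree and every optimal embedding of $G$ is optimal for $G'$, so again the span is at least $\mathrm{rotdim}_{G'}(s',l')$. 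For contraction of $e=ab$ to a vertex $c$, set $l(e)=0$ (forcing $\bm{v}(a)=\bm{v}(b)$), choose $s(a),s(b)$ with $s(a)+s(b)=s'(c)$, give every edge $ai$ or $bi$ of $G$ the length $l'(ci)$, and inherit all remaining weights and lengths; then the map $\bm{v}\mapsto\bm{v}'$ with $\bm{v}'(c):=\bm{v}(a)=\bm{v}(b)$ and $\bm{v}'(i):=\bm{v}(i)$ otherwise is a bijection between feasible embeddings for $G$ and for $G'$ preserving both the objective value and (as the multiset of image points is unchanged) the span, whence $\mathrm{rotdim}_G(s,l)=\mathrm{rotdim}_{G'}(s',l')$.

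The verifications that the equilibrium constraint $\sum_i s(i)\bm{v}(i)=0$, the distance constraints, and the objective transfer correctly under these constructions are routine once one knows $\bm{v}(a)=\bm{v}(b)$ (contraction), that the new edge is long enough (deletion), or that the new vertex carries weight $0$ (isolated vertex). The step needing genuine care is edge deletion: I must confirm that enlarging $G'$ to $G$ neither makes \eqref{emb_opt} unbounded nor destroys attainment of the optimum, and that the chosen length on $e$ is large enough that $\bm{v}^{*}$ remains feasible, so that the $G$- and $G'$-optimal values coincide and every $G$-optimal embedding is $G'$-optimal. This is exactly where connectedness (that $e$ is not a bridge) is essential: for a disconnected graph with positive weights the objective of \eqref{emb_opt} is unbounded, so restricting minors to the class of connected graphs is not merely cosmetic.
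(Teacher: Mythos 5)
The paper does not prove this statement; it is quoted from G\"oring--Helmberg--Wappler \cite{Goring2}, so your argument has to stand on its own. Your per-operation constructions are the natural ones and are essentially sound as far as they go: weight $0$ on a deleted isolated vertex decouples it from \eqref{emb_opt}; for edge deletion, choosing $l(e)$ so that a fixed $G'$-optimal embedding stays feasible makes the two optimal values coincide, so the $G$-optimal set is a subset of the $G'$-optimal set and the minimum span dimension can only increase; for contraction, $l(e)=0$ together with $s(a)+s(b)=s'(c)$ gives a feasibility- and objective-preserving correspondence, so $\mathrm{rotdim}_G(s,l)=\mathrm{rotdim}_{G'}(s',l')$. (Attainment of the optimum, which you gloss over, does hold by compactness of the feasible set for a connected graph, and remains true in your isolated-vertex case because the free vertex carries weight $0$.)

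The genuine gap is your standing convention. The theorem, and the uses the paper makes of it (e.g.\ $\mathrm{rotdim}(G)\geq\omega(G)-1$, and the proof of Theorem \ref{complete rotdim}), concern arbitrary minors: a sequence of the three operations may delete a bridge or pass through disconnected intermediate graphs, where \eqref{emb_opt} with positive weights is unbounded and $\mathrm{rotdim}$ is defined component-wise, so your chain of inequalities simply does not apply as written. Declaring that edge deletion ``is applied only to a non-bridge edge'' changes the statement being proved unless you supply the reduction, which you do not. Two standard patches exist: (a) handle the excluded operations directly --- for a bridge $e=ab$ of a connected $C$, each component of $C\setminus\{e\}$ is a contraction-minor of $C$, so $\mathrm{rotdim}(C\setminus\{e\})=\max$ of the two is at most $\mathrm{rotdim}(C)$ by the cases you did treat, and operations inside a disconnected graph are handled component by component; or (b) prove the branch-set fact that a connected minor of a connected graph can always be obtained by contractions followed by deletions of non-bridge edges, and reduce a disconnected $G'$ to its components. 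Either is routine, but neither appears in your write-up; as it stands you have proved monotonicity only along connectivity-preserving minor sequences. (There is also a small internal tension: under your convention a graph containing an isolated vertex together with other vertices is not connected, so your case (i) already steps outside the class you restricted to, even though the argument there is harmless.)
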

When a graph $G$ is disconnected, the rotational dimension of $G$ is defined as
$$\mathrm{rotdim} (G) := \mathrm{max} \{ \mathrm{rotdim} (C) \mid C \ \text{is a connected component of} \ G \} .$$
Graphs having low rotational dimensions are classified as follows.
\begin{Theorem}[\cite{Goring2}]
\begin{align*}
& \mathrm{rotdim} (G) = 0 \Leftrightarrow G \ does \ not \ have \ edges,\\
& \mathrm{rotdim} (G) \leq 1 \Leftrightarrow G \ is \ composed \ of \ disjoint \ union \ of \ paths,\\
& \mathrm{rotdim} (G) \leq 2 \Leftrightarrow G \ is \ outer \ planar.
\end{align*}
\end{Theorem}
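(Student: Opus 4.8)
The plan is to prove the three equivalences separately, each time splitting into a ``structure $\Rightarrow$ bound'' implication and a ``bound $\Rightarrow$ structure'' implication. For the first kind I would use that edgeless graphs, paths, and \emph{maximal} outerplanar graphs are chordal with clique number $1$, $2$, $3$ respectively; for the second I would combine forbidden-minor descriptions of the three classes with the minor monotonicity of Theorem~\ref{minor rotdim}. I first note that the tree-width bound of Theorem~\ref{rotdim-twbound} only gives $\mathrm{rotdim}\le 2$ for a path and $\mathrm{rotdim}\le 3$ for an outerplanar graph, so it is off by one in the last two equivalences and a sharper argument is needed there.

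\emph{Structure $\Rightarrow$ bound.} If $G$ has no edge, each component is a single vertex, which the equilibrium constraint of \eqref{emb_opt} sends to the origin (or, if its weight is $0$, the zero map is still optimal), so $\mathrm{rotdim}(G)=0$. If $G$ is a disjoint union of paths, I would take the parameters positive (permissible by the reduction recalled in Section~1); for any optimal pair $(\bm v,w)$ of \eqref{emb_opt} and \eqref{eigen_opt}, optimality forces $\lambda_1(L(G;(w,s)))>0$, so $G_w$ is connected and hence equals the whole path, and then $L(G;(w,s))$ is, after relabeling along the path, a symmetric irreducible tridiagonal matrix and so has simple spectrum; thus $\lambda_1$ has multiplicity $1$, and the argument following \eqref{KKT2} yields $\dim\mathrm{span}\{\bm v(i)\}\le 1$, so $\mathrm{rotdim}(G)\le 1$. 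If $G$ is outerplanar, I would handle its components separately: a component on at least three vertices is a subgraph, hence a minor, of a maximal outerplanar graph $G'$ on the same vertices; $G'$ is a $2$-tree, hence chordal with clique number $3$, so the chordal-tightness fact recorded in Section~1 gives $\mathrm{rotdim}(G')=2$, whence $\mathrm{rotdim}(G)\le 2$ by Theorem~\ref{minor rotdim}. (Alternatively, the upper bounds can be read off directly from the Separator-Shadow Theorem~\ref{Separator-Shadow}: for a separator $S$ with $|S|\le 2$ the image of one of the two sides lies, away from the degenerate case $0\in\mathrm{conv}\{\bm v(s):s\in S\}$, inside $\mathrm{span}\{\bm v(s):s\in S\}$, a space of dimension at most $2$.)

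\emph{Bound $\Rightarrow$ structure.} Here I would argue contrapositively. If $G$ has an edge then $G\succeq K_2$; if $G$ is not a disjoint union of paths then some component contains a cycle or a vertex of degree at least $3$, so $G\succeq K_3$ or $G\succeq K_{1,3}$; if $G$ is not outerplanar then, by the classical forbidden-minor characterization, $G\succeq K_4$ or $G\succeq K_{2,3}$. By minor monotonicity it then suffices to establish $\mathrm{rotdim}(K_2)\ge 1$, $\mathrm{rotdim}(K_3)\ge 2$, $\mathrm{rotdim}(K_4)\ge 3$, $\mathrm{rotdim}(K_{1,3})\ge 2$ and $\mathrm{rotdim}(K_{2,3})\ge 3$; the first three are immediate from $\mathrm{rotdim}(K_n)=n-1$. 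For $K_{1,3}$ with uniform parameters, edge-transitivity together with concavity of $w\mapsto\lambda_1(L(G;(w,s)))$ makes the uniform edge weight optimal, and I would analyse the associated eigenproblem and \eqref{KKT1}--\eqref{KKT2} to show the star's center is forced to the origin and its three leaves to unit vectors summing to zero, a planar $120^{\circ}$ configuration, so $\mathrm{rotdim}(K_{1,3})\ge 2$. For $K_{2,3}$ the same symmetric computation under uniform parameters instead collapses the two degree-$3$ vertices onto the origin and yields only a $2$-dimensional embedding; one must therefore choose suitable \emph{non}-uniform parameters and solve \eqref{KKT1}--\eqref{KKT2} for that choice, producing an optimal embedding whose two degree-$3$ vertices sit at distinct points off the affine plane spanned by the three degree-$2$ vertices, so that $\mathrm{rotdim}(K_{2,3})\ge 3$.

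\emph{Main obstacle.} The one genuinely nontrivial step is the $K_{2,3}$ computation: unlike the other obstructions it admits no symmetry reduction to a one-parameter family, so one must explicitly design the parameters $s,l$ and verify from \eqref{KKT1}--\eqref{KKT2} that \emph{every} optimal embedding for that choice spans three dimensions. A secondary, bookkeeping, point is to confirm that the chordal-tightness fact used for the outerplanar upper bound is proved in Section~4 independently of the present classification, so that no circularity is introduced; using the Separator-Shadow route instead removes this dependence, at the cost of dealing with the degenerate alignment $0\in\mathrm{conv}\{\bm v(s):s\in S\}$.
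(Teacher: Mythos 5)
This statement is quoted from \cite{Goring2}; the paper you are annotating gives no proof of it, so your proposal has to stand on its own. Your treatment of the first two equivalences is essentially sound: the edgeless case, the $K_2$, $K_3$, $K_{1,3}$ obstructions via Theorem~\ref{minor rotdim}, and the simple-spectrum (irreducible tridiagonal) argument giving multiplicity $1$ of $\lambda_1$ for a path, hence $\dim\mathrm{span}\{\bm v(i)\}\le 1$ via the observation following \eqref{KKT2}, all work. The third equivalence, however, has a genuine gap in both directions. For ``outerplanar $\Rightarrow\mathrm{rotdim}\le 2$'' you claim that a maximal outerplanar graph $G'$, being a $2$-tree (chordal, $\omega=3$), has $\mathrm{rotdim}(G')=2$ by the ``chordal-tightness fact''. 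That fact only says $\omega(G')-1\le\mathrm{rotdim}(G')\le\omega(G')$, i.e.\ $2\le\mathrm{rotdim}(G')\le 3$; it does not select the lower value. Indeed this paper's own examples show the upper value can occur for chordal graphs with $\omega=3$: the graph $K_3\oplus K_3\oplus K_3$ over a common $K_2$ (the Remark after Proposition~\ref{prop_sol_Kn-e}, and more generally $G(m,k)$ with $k\ge 3$ in Theorem~4.3) is a $2$-tree containing $K_{2,3}$, hence not outerplanar, and has rotational dimension $3=\omega$. So ``$2$-tree $\Rightarrow\mathrm{rotdim}=2$'' is false as an inference, and asserting it for maximal outerplanar graphs is exactly the hard direction you are supposed to prove (circular). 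Your parenthetical Separator-Shadow alternative does not repair this: Theorem~\ref{Separator-Shadow} confines only \emph{one} of the two components to the span of the separator's image, and with a $2$-separator nothing in your sketch controls the other component (in the proof of Theorem~\ref{edge complete} this is rescued only because the other side is a single vertex pinned by the equilibrium constraint). Some inductive construction of a low-dimensional optimal embedding, as in \cite{Goring2}, is unavoidable here.

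In the converse direction, the whole weight of ``$\mathrm{rotdim}\le 2\Rightarrow$ outerplanar'' rests on $\mathrm{rotdim}(K_{2,3})\ge 3$, and you explicitly leave that computation undone: as you note, the symmetric choice $s=\bm 1$, $l=\bm 1$ collapses the two degree-$3$ vertices and only certifies a $2$-dimensional optimal embedding, so one must exhibit concrete non-uniform parameters and verify through \eqref{KKT1}--\eqref{KKT2} that \emph{every} optimal embedding for them spans three dimensions. Announcing that such parameters ``must be designed'' is a statement of the problem, not a proof; until that computation (or an equivalent lower-bound argument) is supplied, and the outerplanar upper bound is proved by a non-circular route, the third equivalence remains unestablished.
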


\section{Rotational dimension of a complete graph}

In this section, we find an optimal embedding and the rotational dimension of a graph obtained by removing one edge from a complete graph.
By determining the rotational dimension of this graph, we obtain the characterization of the complete graph by its rotational dimension.
First, an upper bound on the rotational dimension for any graph on $n$ vertices is obtained as follows.
\begin{Proposition} \label{rotdim1}
If $G$ is a graph on $n$ vertices, then
$$\mathrm{rotdim} (G) \leq n-1.$$ 
\end{Proposition}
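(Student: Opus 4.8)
The plan is to exploit the equilibrium constraint directly. Let $\bm{v}$ be any feasible solution of \eqref{emb_opt} for parameters $s, l$. The equilibrium constraint $\|\sum_{i \in V} s(i)\bm{v}(i)\|^2 = 0$ means that $\sum_{i \in V} s(i)\bm{v}(i) = 0$ in $\mathbb{R}^n$. Since all the weights $s(i)$ are positive (or at least not all zero), this is a nontrivial linear relation among the vectors $\bm{v}(1), \dots, \bm{v}(n)$. First I would observe that a single nontrivial linear dependence among $n$ vectors forces the dimension of their span to be at most $n-1$; that is,
$$\dim \mathrm{span}\{\bm{v}(i) \mid i \in V\} \leq n-1$$
for \emph{every} feasible $\bm{v}$, in particular for every optimal one.

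Next I would feed this into the definitions. By the displayed bound above, $\mathrm{rotdim}_G(s,l) \leq n-1$ for every choice of $s$ and $l$ (with $s$ taking values in $\mathbb{N}\cup\{0\}$ not identically zero on the relevant component, and similarly $l$), since the minimum over optimal embeddings of a quantity that is $\leq n-1$ is itself $\leq n-1$. Taking the maximum over all admissible $s$ and $l$ then gives $\mathrm{rotdim}(G) \leq n-1$, which is the claim. A small point to address is the case where $s$ is supported on fewer than $n$ vertices or the graph is disconnected: one can either restrict attention to a connected component (using the disconnected-case convention recalled in the excerpt) or simply note that the equilibrium relation still involves at least one vertex with positive weight, and if only $k \leq n$ vertices carry positive weight the argument gives span dimension $\leq k-1 \leq n-1$ for those, while vertices of weight zero contribute at most $n-1$ to the span in any case; alternatively one invokes that $s$ may be taken strictly positive in the definition of $\mathrm{rotdim}(G)$, as noted after the Definition.

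The argument is essentially elementary linear algebra, so there is no serious obstacle; the only thing requiring a moment's care is the bookkeeping around zero weights and disconnected graphs, i.e.\ making sure the equilibrium constraint genuinely yields a nontrivial dependence relation on a vertex set that accounts for the full ambient span. Given that the excerpt explicitly permits replacing the integer weights with strictly positive real weights in the definition of $\mathrm{rotdim}(G)$, the cleanest route is to work with $s: V \to \mathbb{R}_{>0}$, where $\sum_{i\in V} s(i)\bm{v}(i) = 0$ is immediately a nontrivial relation among all $n$ vectors, and conclude $\dim\mathrm{span}\{\bm{v}(i)\} \leq n-1$ at once.
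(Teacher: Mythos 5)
Your proposal is correct and follows the same route as the paper: the equilibrium constraint gives a nontrivial linear relation among $\bm{v}(1),\dots,\bm{v}(n)$, forcing $\dim \mathrm{span}\{\bm{v}(i)\mid i\in V\}\leq n-1$ for every optimal (indeed feasible) embedding, and hence $\mathrm{rotdim}(G)\leq n-1$. The paper likewise sidesteps the zero-weight bookkeeping by working with strictly positive parameters, as permitted by the definition.
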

\begin{proof}
For any parameters $s: V \to \mathbb{R}_{>0}$, $l: E \setminus \{ e \} \to \mathbb{R}_{>0}$ and any optimal embedding $\bm{v}: V \to \mathbb{R}^n$, we have $\mathrm{dim \ span} \{ \bm{v}(i) \in \mathbb{R}^n \mid i \in V \} \leq n-1$, because the equilibrium constraint $\| \sum_{i \in V} s(i) \bm{v}(i) \|^2 = 0$ is satisfied.
Therefore, $\mathrm{rotdim} (G) \leq n-1$.
\end{proof}
When the parameters $s$, $l$ are both uniform, a regular simplex is a unique optimal embedding for a complete graph.
Here, we regard embeddings which differ by a rotation around the origin as same. 
Then we have $\mathrm{rotdim}_{K_n} (1,1) = n-1$.
The rotational dimension $\mathrm{rotdim} (K_n)$ has same value.
\begin{Proposition}\label{rotdim-completegraph}
$$\mathrm{rotdim} (K_n) = n-1.$$
\end{Proposition}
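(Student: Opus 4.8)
The plan is to establish both inequalities $\mathrm{rotdim}(K_n) \geq n-1$ and $\mathrm{rotdim}(K_n) \leq n-1$, the latter already being Proposition~\ref{rotdim1}. For the lower bound, the natural route is to exhibit a choice of parameters — namely the uniform parameters $s = {\bf 1}$, $l = {\bf 1}$ — for which $\mathrm{rotdim}_{K_n}(1,1) = n-1$, since $\mathrm{rotdim}(K_n)$ is the maximum over all integer parameters. So the whole task reduces to verifying that the regular simplex embedding is optimal for $(K_n;{\bf 1},{\bf 1})$ and that every optimal embedding spans an $(n-1)$-dimensional space.

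First I would guess the optimal edge weight: by the symmetry of $K_n$ under its automorphism group, the optimal $w$ in problem~\eqref{eigen_opt} should be constant on all edges, say $w(ij) = c$ for all $ij \in E$, with $c$ fixed by the normalization $\sum_{ij \in E} l(ij)^2 w(ij) = c\binom{n}{2} = 1$, so $c = \binom{n}{2}^{-1}$. For this $w$, $L(K_n;(w,{\bf 1})) = c\bigl(\sum_{ij} E_{ij}\bigr) = c\,(nI - {\bf 1}\,{}^t{\bf 1})$, whose nonzero eigenvalue is $cn$ with multiplicity $n-1$. Then I would produce the candidate embedding: place $\bm{v}(1),\dots,\bm{v}(n)$ at the vertices of a regular simplex centered at the origin, scaled so that $\|\bm{v}(i) - \bm{v}(j)\| = 1$ for all $i \neq j$; this satisfies the equilibrium constraint (centroid at origin) and meets all distance constraints with equality. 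I would then check that this pair $(\bm{v}, w)$ satisfies the complementary slackness conditions~\eqref{KKT1} and~\eqref{KKT2}: condition~\eqref{KKT1} holds because every distance constraint is tight, and condition~\eqref{KKT2} reduces, using ${\bf 1}\,{}^t{\bf 1}\,\bm{v}(i) = 0$, to the eigenvector equation for $\lambda_1 = cn$, which holds since each $\bm{v}(i)$ lies in the image of $nI - {\bf 1}\,{}^t{\bf 1}$. Hence both are optimal, and the regular simplex spans an $(n-1)$-dimensional subspace, giving $\mathrm{rotdim}_{K_n}(1,1) \geq n-1$.

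Combining with Proposition~\ref{rotdim1}, which gives $\mathrm{rotdim}(K_n) \leq n-1$, and with $\mathrm{rotdim}(K_n) \geq \mathrm{rotdim}_{K_n}(1,1) \geq n-1$, we conclude $\mathrm{rotdim}(K_n) = n-1$.

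The main obstacle is the uniqueness claim made in the text preceding the statement — that the regular simplex is the \emph{unique} optimal embedding up to rotation, so that in fact $\mathrm{rotdim}_{K_n}(1,1)$ equals (not merely is at least) $n-1$. For the proposition as stated only the inequality is needed, so one could sidestep uniqueness entirely; but if one wants it, the argument would go: any optimal $\bm{v}$ must, by~\eqref{KKT1} with $w > 0$ on every edge, satisfy $\|\bm{v}(i) - \bm{v}(j)\| = 1$ for all $i \neq j$ — i.e. the points are pairwise at distance exactly $1$, which forces them to be vertices of a unit regular simplex — and the equilibrium constraint then pins the centroid to the origin, determining the configuration up to an orthogonal transformation. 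That $n$ points in Euclidean space with all pairwise distances equal must be affinely independent (hence span an $(n-1)$-dimensional affine subspace, and with centroid at the origin an $(n-1)$-dimensional linear subspace) is the one small lemma that needs to be invoked or proved.
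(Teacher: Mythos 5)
Your overall route is the same as the paper's: lower bound via the uniform parameters $s=\bm{1}$, $l=\bm{1}$ and the regular-simplex embedding of $K_n$, upper bound via Proposition~\ref{rotdim1}; the paper simply asserts that the regular simplex is the unique optimal embedding for $(\bm{1},\bm{1})$ and concludes $\mathrm{rotdim}_{K_n}(1,1)=n-1$, while you fill in the verification through the complementary slackness conditions \eqref{KKT1}--\eqref{KKT2} with the constant weight $w\equiv\binom{n}{2}^{-1}$.

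One correction, though: your claim that ``for the proposition as stated only the inequality is needed, so one could sidestep uniqueness entirely'' is wrong, and if taken literally it leaves a gap. Since $\mathrm{rotdim}_{K_n}(1,1)$ is defined as a \emph{minimum} over all optimal embeddings, exhibiting the regular simplex as one optimal embedding of span dimension $n-1$ only yields $\mathrm{rotdim}_{K_n}(1,1)\le n-1$; the conclusion you draw at the end of your second paragraph, $\mathrm{rotdim}_{K_n}(1,1)\ge n-1$, does not follow at that point. What is essential for the lower bound is precisely the argument you relegate to an optional afterthought in your last paragraph: pairing the strictly positive optimal weight with an \emph{arbitrary} optimal embedding $\bm{v}$, condition \eqref{KKT1} forces $\|\bm{v}(i)-\bm{v}(j)\|=1$ for every pair $i\ne j$, so the images are $n$ points with all pairwise distances equal, hence affinely independent and (with the equilibrium constraint placing the centroid at the origin) of linear span dimension $n-1$. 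With that argument promoted from optional to necessary, your proof is complete and matches the paper's intent.
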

\begin{proof}
We have $\mathrm{rotdim} (K_n) \geq \mathrm{rotdim}_{K_n} (1,1) = n-1$ by the definition of the rotational dimension.
The reverse inequality is obtained by Proposition \ref{rotdim1}.
\end{proof}
We can immediately see that $\mathrm{rotdim} (G) \geq \omega (G) -1$ by this proposition and Theorem \ref{minor rotdim}.
where $\omega (G)$ is the clique number of $G$.
A complete subgraph is called a {\it clique}, and a {\it clique number} $\omega (G)$ is the number of vertices of the largest size clique in $G$.

The following theorem is the main result in this paper. We characterize the complete graph by its rotational dimension.
\begin{Theorem} \label{complete rotdim}
If $G$ is a graph on $n$ vertices, then
$$\mathrm{rotdim} (G) = n-1 \ \Leftrightarrow \ G=K_n.$$
\end{Theorem}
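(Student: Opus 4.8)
The plan is to prove the two implications separately. The implication $G = K_n \Rightarrow \mathrm{rotdim}(G) = n-1$ is precisely Proposition \ref{rotdim-completegraph}, so the real content is the converse, which I would prove in contrapositive form: if $G$ is not complete then $\mathrm{rotdim}(G) \le n-2$, which together with Proposition \ref{rotdim1} gives the stated equivalence. So suppose $G \ne K_n$ (and $n\ge 3$, the cases $n\le 2$ being immediate) and choose two non-adjacent vertices $a, b$. The point is that $S := V \setminus \{a,b\}$, a set of $n-2$ vertices, is a separator of $G$: deleting it leaves only $a$ and $b$, and these are not adjacent. Fix arbitrary positive parameters $s,l$, let $\bm{v}\colon V \to \mathbb{R}^n$ be any optimal embedding, and pick an optimal edge weight $w$ (one exists by compactness of the feasible region of \eqref{eigen_opt}, and at the optimum $\lambda_1(L(G;(w,s))) > 0$, so $G_w$ is connected). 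Since $G_w - S$ still consists of the two isolated vertices $a$ and $b$, the Separator-Shadow Theorem \ref{Separator-Shadow} applies with the two components $\{a\}$ and $\{b\}$; after possibly interchanging $a$ and $b$ it gives $\mathrm{conv}\{0,\bm{v}(a)\} \cap \mathrm{conv}\{\bm{v}(s)\mid s\in S\} \ne \emptyset$, that is, $\lambda\,\bm{v}(a) = \sum_{s\in S}\mu_s\,\bm{v}(s)$ for some $\lambda\in[0,1]$ and nonnegative $\mu_s$ summing to $1$.

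From here I would show the $n-1$ vectors $\bm{v}(a)$ and $\{\bm{v}(s)\mid s\in S\}$ are linearly dependent. Indeed, if they were independent then $\bm{v}(a)\notin\mathrm{span}\{\bm{v}(s)\mid s\in S\}$, forcing $\lambda = 0$ and hence $\sum_{s\in S}\mu_s\,\bm{v}(s)=0$ with $\sum_{s\in S}\mu_s = 1$, a nontrivial dependence among the $\bm{v}(s)$ --- a contradiction. Thus $\dim\mathrm{span}(\{\bm{v}(a)\}\cup\{\bm{v}(s)\mid s\in S\}) \le n-2$. Finally the equilibrium constraint $\sum_{i\in V}s(i)\bm{v}(i)=0$ expresses $\bm{v}(b)$ as a linear combination of the other $\bm{v}(i)$, so $\mathrm{span}\{\bm{v}(i)\mid i\in V\}$ coincides with $\mathrm{span}(\{\bm{v}(a)\}\cup\{\bm{v}(s)\mid s\in S\})$ and has dimension at most $n-2$. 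As $\bm{v}$, $s$, $l$ were arbitrary, $\mathrm{rotdim}(G)\le n-2$. Applying this to $G = K_n - e$ and combining with $\mathrm{rotdim}(K_n - e)\ge\mathrm{rotdim}(K_{n-1}) = n-2$, which follows from $K_n - e\succeq K_{n-1}$ and Theorem \ref{minor rotdim}, also recovers the value $\mathrm{rotdim}(K_n - e) = n-2$ highlighted in the introduction.

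The step I expect to be the main obstacle is the degenerate case $\lambda = 0$ of the Separator-Shadow conclusion, in which the segment $\mathrm{conv}\{0,\bm{v}(a)\}$ touches the convex hull of the separator images only at the origin; one must recognize that this does not weaken the estimate but instead yields a genuine linear relation among the $\bm{v}(s)$, which is exactly what makes the dimension count work. A smaller technical point is to check that $S = V\setminus\{a,b\}$ really satisfies the hypotheses of Theorem \ref{Separator-Shadow} --- that it separates $G_w$ into exactly two nonempty connected components --- which here is automatic because those components are single vertices.
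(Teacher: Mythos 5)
Your proof is correct, and it shares the paper's central geometric idea --- the Separator--Shadow Theorem \ref{Separator-Shadow} applied to the separator $S=V\setminus\{a,b\}$ given by a non-adjacent pair, with the equilibrium constraint used to absorb the remaining vertex $b$ --- but it is organized along a genuinely different route. The paper first proves Theorem \ref{edge complete}, running exactly this argument for the single graph $K_n\setminus\{e\}$ (split into two cases according to whether the origin lies in $\mathrm{conv}\{\bm{v}(i)\mid i\in S\}$), and then deduces the characterization for arbitrary non-complete $G$ by observing that such $G$ is a minor of $K_n\setminus\{e\}$ and invoking minor monotonicity (Theorem \ref{minor rotdim}). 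You instead run the Separator--Shadow argument directly on an arbitrary non-complete $G$, and you replace the paper's two-case analysis by a single observation: the relation $\lambda\,\bm{v}(a)=\sum_{s\in S}\mu_s\bm{v}(s)$ with $\sum_s\mu_s=1$ is always a nontrivial linear dependence among the $n-1$ vectors $\{\bm{v}(a)\}\cup\{\bm{v}(s)\mid s\in S\}$, whether $\lambda=0$ or not --- which is exactly the degenerate case you flagged, and you resolve it correctly. What your route buys is independence from the minor-monotonicity theorem for this implication and a uniform treatment of the two cases; what the paper's route buys is the intermediate result $\mathrm{rotdim}(K_n\setminus\{e\})=n-2$ as a named theorem (which you recover as a corollary anyway) and a very short derivation of Theorem \ref{complete rotdim} from it. You are also somewhat more careful than the paper on the hypotheses of Theorem \ref{Separator-Shadow} (existence of an optimal $w$ and connectivity of $G_w$ at the optimum), and your restriction to positive parameters $s,l$ is legitimate by the reduction of G\"oring--Helmberg--Wappler cited in the introduction, which the paper's own proof also uses implicitly.
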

A similar property for the valid representation invariant $\lambda (G)$ is obtained by Holst-Laurent-Schrijver \cite{Holst}.
In order to prove Theorem \ref{complete rotdim}, we consider a graph obtained by removing one edge from the complete graph $K_n$, which is the clique sum of two $K_{n-1}$'s.
\begin{Definition}
Let $G_1$ and $G_2$ be graphs with same size cliques. 
The {\em clique sum} $G_1 \oplus G_2$ of $G_1$ and $G_2$ is the graph obtained by identifying the respective cliques in the disjoint union of $G_1$ and $G_2$.
\end{Definition}
The rotational dimension of the graph $K_n \setminus \{ e \}$ is calculated as follows.
\begin{Theorem} \label{edge complete}
Let $K_n = (V,E)$ be a complete graph on $n$ vertices.
We take one edge $e \in E$. 
Then
$$\mathrm{rotdim} (K_n \setminus \{ e \} ) = n-2.$$
\end{Theorem}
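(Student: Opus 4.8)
The plan is to prove the two inequalities $\mathrm{rotdim}(K_n\setminus\{e\})\geq n-2$ and $\mathrm{rotdim}(K_n\setminus\{e\})\leq n-2$ separately (the cases $n\leq 2$ being trivial, so assume $n\geq 3$). Let $p$ and $q$ be the endpoints of the removed edge $e$, and set $S:=V\setminus\{p,q\}$, so that $|S|=n-2$; then $S\cup\{p\}$ and $S\cup\{q\}$ each induce a copy of $K_{n-1}$ in $K_n\setminus\{e\}$, exhibiting this graph as the clique sum of two $K_{n-1}$'s along $S$. For the lower bound, $K_{n-1}$ occurs as an induced subgraph, hence as a minor, of $K_n\setminus\{e\}$, so Theorem \ref{minor rotdim} together with Proposition \ref{rotdim-completegraph} gives $\mathrm{rotdim}(K_n\setminus\{e\})\geq\mathrm{rotdim}(K_{n-1})=n-2$.

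The substantial direction is the upper bound. Since $\mathrm{rotdim}$ may be evaluated using only positive parameters, it suffices to show that for every $s\colon V\to\mathbb{R}_{>0}$ and $l\colon E\to\mathbb{R}_{>0}$ and every optimal embedding $\bm{v}$ of \eqref{emb_opt} one has $\dim\mathrm{span}\{\bm{v}(i)\mid i\in V\}\leq n-2$. Fix such $s,l$, an optimal $\bm{v}$, and an optimal edge weight $w$. Because $p$ and $q$ are non-adjacent in $K_n\setminus\{e\}$, the graph $G_w\setminus S$ is the edgeless graph on the vertex set $\{p,q\}$, so $S$ is a separator of $G_w$ dividing it into the two connected components $\{p\}$ and $\{q\}$, and the Separator-Shadow Theorem \ref{Separator-Shadow} applies. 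After possibly interchanging $p$ and $q$, it yields $\mathrm{conv}\{0,\bm{v}(p)\}\cap\mathrm{conv}\{\bm{v}(j)\mid j\in S\}\neq\emptyset$, that is, there exist $t\in[0,1]$ and $\lambda\colon S\to\mathbb{R}_{\geq 0}$ with $\sum_{j\in S}\lambda_j=1$ and $t\,\bm{v}(p)=\sum_{j\in S}\lambda_j\bm{v}(j)$. If $t>0$ then $\bm{v}(p)\in\mathrm{span}\{\bm{v}(j)\mid j\in S\}$; if $t=0$ then $\sum_{j\in S}\lambda_j\bm{v}(j)=0$ is a nontrivial linear relation among the $n-2$ vectors $\bm{v}(j)$, $j\in S$. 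In either case the subspace $W:=\mathrm{span}\bigl(\{\bm{v}(j)\mid j\in S\}\cup\{\bm{v}(p)\}\bigr)$ has dimension at most $n-2$. Finally, the equilibrium constraint $\sum_{i\in V}s(i)\bm{v}(i)=0$ gives $s(q)\,\bm{v}(q)=-\sum_{j\in S}s(j)\,\bm{v}(j)-s(p)\,\bm{v}(p)$, and $s(q)>0$ then forces $\bm{v}(q)\in W$; hence $\dim\mathrm{span}\{\bm{v}(i)\mid i\in V\}=\dim W\leq n-2$. Combined with the lower bound, this proves the theorem.

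I expect the part requiring most care to be the application of the Separator-Shadow Theorem, for two reasons. First, one must check that $S$ is an admissible separator of $G_w$; here this is immediate, since $\{p\}$ and $\{q\}$ are isolated vertices of $G_w\setminus S$, so no delicate question about the connectivity of $G_w$ itself intervenes. Second, one must treat the degenerate case $t=0$, in which the shadow condition weakens from ``$\bm{v}(p)$ is a nonnegative multiple of a convex combination of the $\bm{v}(j)$'' to a mere linear dependence among the $\bm{v}(j)$; both branches, however, collapse the dimension in the same way, after which the equilibrium constraint does the rest. It is worth noting that the tree-width bound of Theorem \ref{rotdim-twbound} is by itself too weak: $\mathrm{tw}(K_n\setminus\{e\})=n-2$, so it only gives $\mathrm{rotdim}(K_n\setminus\{e\})\leq n-1$, the same as the general bound of Proposition \ref{rotdim1}; it is the Separator-Shadow argument that pins down the exact value $n-2$.
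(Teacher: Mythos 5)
Your proof is correct and follows essentially the same route as the paper: minor monotonicity gives the lower bound, and the Separator--Shadow Theorem combined with the equilibrium constraint collapses the dimension to $n-2$ for the upper bound. The only cosmetic difference is how the degenerate case is organized — you split on whether the intersection point $t\,\bm{v}(p)$ has $t=0$, while the paper splits on whether $0\in\mathrm{conv}\{\bm{v}(i)\mid i\in S\}$ — but the two case analyses amount to the same argument.
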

\begin{proof}
Since $K_n \setminus \{ e \}$ contains $K_{n-1}$ as a minor, $\mathrm{rotdim} (K_n \setminus \{ e \} ) \geq n-2$ holds by Theorem \ref{minor rotdim}.
We prove the other inequality.
Let $\bm{v}: V \to \mathbb{R}^n$ be an optimal embedding for $K_n \setminus \{ e \}$ that attains $\mathrm{rotdim}_{K_n \setminus \{ e \}} (s, l)$, for $s: V \to \mathbb{R}_{>0}$ and $l: E \setminus \{ e \} \to \mathbb{R}_{>0}$. 
If $\mathrm{rotdim}_{K_n \setminus \{ e \}} (s, l) \leq n-2$ is shown for such parameters, then we can obtain $\mathrm{rotdim} (K_n \setminus \{ e \} ) \leq n-2$.
We regard $K_n \setminus \{ e \}$ as $K_{n-1} \oplus K_{n-1}$ with a common clique $K_{n-2}$.
Let $S := V \setminus \{ 1,2 \}$ be a vertex set of the common clique $K_{n-2}$.

Case (i) : $0 \not\in \mathrm{conv} \{ \bm{v}(i) \in \mathbb{R}^n \mid i \in S \}$. \ 
The dimension of the linear subspace $\mathrm{span} \{ \bm{v}(i) \in \mathbb{R}^n \mid i \in S \}$ is $n-2$ or less. 
By Theorem \ref{Separator-Shadow} we see that $\bm{v}(1) \in \mathrm{span} \{ \bm{v}(i) \in \mathbb{R}^n \mid i \in S \}$ or $\bm{v}(2) \in \mathrm{span} \{ \bm{v}(i) \in \mathbb{R}^n \mid i \in S \}$, say the former.
We also have $\bm{v}(2) \in \mathrm{span} \{ \bm{v}(i) \in \mathbb{R}^n \mid i \in S \}$ by the equilibrium constraint $\| \sum_{i \in V} s(i) \bm{v}(i) \|^2 = 0$.
Thus, $\mathrm{span} \{ \bm{v}(i) \in \mathbb{R}^n \mid i \in V \} \subset \mathrm{span} \{ \bm{v}(i) \in \mathbb{R}^n \mid i \in S \}$, and therefore, $\mathrm{dim \ span} \{ \bm{v}(i) \in \mathbb{R}^n \mid i \in V \} \leq n-2$.

Case (ii) : $0 \in \mathrm{conv} \{ \bm{v}(i) \in \mathbb{R}^n \mid i \in S \}$. \ 
Since the affine subspace $\text{aff-span} \{ \bm{v}(i) \in \mathbb{R}^n \mid i \in S \cup \{ 1 \} \}$ contains the origin, $\mathrm{dim \ span} \{ \bm{v}(i) \in \mathbb{R}^n \mid i \in S \cup \{ 1 \} \} $ is $n-2$ or less.
By the equilibrium constraint we have $v_2 \in \mathrm{span} \{ \bm{v}(i) \in \mathbb{R}^n \mid i \in S \cup \{ 1 \} \}$. 
Therefore, $\mathrm{dim \ span} \{ \bm{v}(i) \in \mathbb{R}^n \mid i \in V \} \leq n-2$.
\end{proof}
\noindent
{\it Proof of Theorem \ref{complete rotdim}.} \ 
We verify that if $G$ is a non-complete graph on $n$ vertices, then $\mathrm{rotdim} (G) \leq n-2$.
Such a graph is a minor of $K_n \setminus \{ e \}$.
Then, by Theorems \ref{minor rotdim} and \ref{edge complete}, we have $\mathrm{rotdim} (G) \leq \mathrm{rotdim} (K_n \setminus \{ e \} ) = n-2$.
\hfill $\qed$ \\ 
\\
\indent
Even if the rotational dimension can be calculated, it is generally difficult to find an optimal embedding that attains the rotational dimension.
For $K_n$ or $K_n \setminus \{ e \}$, however, such an embedding can be found when $s = \bm{1}$ and $l = \bm{1}$. 
The $(n-1)$-regular simplex is the only optimal embedding of $K_n$.
For a general graph its complete subgraph may be embedded similarly.
\begin{Proposition} \label{prop_completesubgraph}
Let $G=(V,E)$ be a graph on $n$ vertices with $s = \bm{1}$ and $l = \bm{1}$. 
For an optimal embedding $\bm{v}: V \to \mathbb{R}^n$ and a complete subgraph $K_m$ of $G$, 
if $\| \bm{v}(i) - \bm{v}(j) \| = 1$ holds for any $ij \in E(K_m)$, then the vertices of $K_m$ are mapped bijectively onto the vertices of the $(m-1)$-regular simplex with side length $1$.
\end{Proposition}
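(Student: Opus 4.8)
The plan is to extract the full force of the hypothesis from the completeness of $K_m$, and then recognize the resulting point configuration as a regular simplex by a short Gram-matrix computation.

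First I would note that, $K_m$ being a complete subgraph of $G$, every unordered pair $\{i,j\}$ of distinct vertices of $K_m$ is an edge of $K_m$. Hence the assumption ``$\|\bm v(i)-\bm v(j)\| = 1$ for all $ij \in E(K_m)$'' says precisely that the $m$ points $\bm v(i)$ ($i \in V(K_m)$) are pairwise at distance $1$; in particular they are pairwise distinct, so $\bm v$ restricted to $V(K_m)$ is injective. Since $K_m$ and an $(m-1)$-simplex both have $m$ vertices, it then suffices to show that $\{\bm v(i) : i \in V(K_m)\}$ is the vertex set of an $(m-1)$-simplex (necessarily regular and of side length $1$, since all its edge lengths are $1$), i.e.\ that these $m$ mutually equidistant points are affinely independent.

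For this I would fix a vertex $i_0 \in V(K_m)$ and set $\bm u(i) := \bm v(i) - \bm v(i_0)$ for $i \in V(K_m) \setminus \{i_0\}$. By polarization, $\langle \bm u(i), \bm u(j)\rangle = \tfrac12\bigl(\|\bm u(i)\|^2 + \|\bm u(j)\|^2 - \|\bm v(i) - \bm v(j)\|^2\bigr) = \tfrac12$ when $i \ne j$, while $\|\bm u(i)\|^2 = 1$; hence the Gram matrix of the family $\{\bm u(i)\}$ is $\tfrac12\,(I_{m-1} + J_{m-1})$, with $J_{m-1}$ the all-ones matrix. Its eigenvalues are $m/2$ (simple) and $1/2$ (multiplicity $m-2$), all strictly positive, so the $\bm u(i)$ are linearly independent; consequently the points $\bm v(i)$, $i \in V(K_m)$, are affinely independent and form a regular $(m-1)$-simplex of side $1$, onto whose vertices $\bm v$ maps $V(K_m)$ bijectively.

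I do not expect a genuine obstacle here. The two points deserving a word of care are that the non-degeneracy of the configuration must be verified --- which is exactly what the positivity of the spectrum of $\tfrac12(I + J)$ delivers --- and that ``the $(m-1)$-regular simplex of side $1$'' is to be read up to Euclidean isometry: inside a larger graph the simplex spanned by $V(K_m)$ need not be centered at the origin (already for $K_3 \subset K_4\setminus\{e\}$ with $s = l = \bm 1$ an optimal embedding sends the $K_3$ to an equilateral triangle of side $1$ whose centroid is off the origin), so the assertion concerns the metric shape of the image, not its location.
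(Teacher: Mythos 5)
Your proof is correct, but it follows a genuinely different route from the paper's. The paper argues by induction on $m$: granting that the vertices of a sub-clique $K_{m-1}$ are already placed at the vertices of an $(m-2)$-regular simplex of side $1$, it observes that a further vertex at distance $1$ from all of them must lie on the line through the barycenter orthogonal to the simplex, at the height making every distance equal to $1$, so the enlarged configuration is an $(m-1)$-regular simplex of side $1$. You replace this vertex-by-vertex construction with a one-shot linear-algebra verification: translating by $\bm v(i_0)$, the Gram matrix of the differences is $\tfrac12(I_{m-1}+J_{m-1})$, whose eigenvalues $m/2$ and $1/2$ are positive, so the $m$ mutually equidistant points are affinely independent, and affine independence together with all pairwise distances equal to $1$ is exactly the regular $(m-1)$-simplex. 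Both arguments use only the metric hypothesis (optimality of $\bm v$ plays no role in either); yours makes the non-degeneracy of the configuration explicit, a point the paper's inductive step leaves implicit, while the paper's construction has the side benefit of prefiguring the explicit placement of additional vertices on the orthogonal line through the barycenter that is reused in Propositions \ref{prop_sol_Kn-e} and \ref{prop_sol_G(m,k)}. Your closing remark that the simplex need not sit at the origin is consistent with how the statement is meant (the image is a regular simplex up to isometry, not a specific one).
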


\begin{proof}
The statement is clearly satisfied when $m$ is $1$ or $2$.
Assume that a clique $K_{m-1}$ is embedded as a $(m-2)$-regular simplex with side length $1$ for $m \geq 3$.
We consider a new vertex $i'$ adjacent to all of vertices of $K_{m-1}$.
The vertex $i'$ is mapped on a straight line that is orthogonal to the given $(m-2)$-regular simplex and passes through the center of mass of the simplex, such that $\| \bm{v}(i') - \bm{v}(j) \| = 1$ for any $j \in V(K_{m-1})$.
As a result, we obtain a $(m-1)$-regular simplex with side length $1$.
\end{proof}

\begin{Remark}
When $s = \bm{1}$ and $l = \bm{1}$, if we find an optimal weight whose components are all nonzero, then the condition \eqref{KKT1} gives 
$$\| \bm{v}(i) -\bm{v}(j) \| = 1, \quad \forall ij \in E.$$
In many cases, this condition may be satisfied, thus Proposition \ref{prop_completesubgraph} is applied. 
However, a graph $K_3 \oplus K_3 \oplus K_3$ with the common clique $K_2$, in which each vertex of the common clique $K_2$ connects completely the other three vertices $\{ 1,2,3 \}$, can be embedded optimally by a map $\bm{v}: V \to \mathbb{R}^2$ such that
\begin{equation*}
\bm{v} (i) = 
\renewcommand{\arraystretch}{1.7}
\begin{cases}
{}^{t} (0, 0), & \text{if} \ i \in V(K_2), \\ 
{}^{t} ( \cos \frac{2}{3} \pi i, \sin \frac{2}{3} \pi i ), & \text{if} \ i= 1, 2, 3,
\end{cases}
\end{equation*} 
that is, the edge of $K_2$ is contracted to one point.
On the other hand, the edge weight 
\begin{equation*}
w(ij) = 
\renewcommand{\arraystretch}{1.7}
\begin{cases}
0, & \text{if} \ i \in V(K_2), \\ 
\frac{1}{6}, & \text{if} \ i= 1, 2, 3
\end{cases}
\end{equation*} 
is optimal. 
(See Proposition \ref{prop_sol_G(m,k)}.)
\end{Remark}
Applying Proposition \ref{prop_completesubgraph} to $K_n \setminus \{ e \}$, an optimal embedding can be found.
\begin{Proposition}\label{prop_sol_Kn-e}
The graph $K_n \setminus \{ e \}$ is considered as the clique sum $K_{n-1} \oplus K_{n-1}$ with the common clique $K_m$, where $m:=n-2 > 2$.
Denote the vertex set of the common clique $K_m$ by $\{ 1, \cdots , m \}$, and denote the other vertices by $m+1$, $m+2$. 
If $s = \bm{1}$ and $l = \bm{1}$, then an optimal embedding $\bm{v}: V \to \mathbb{R}^{n-2}$ is uniquely obtained as follows:
\begin{itemize}
\item[(i)] the vertices of $K_{m}$ are mapped bijectively onto the vertices of the $(m-1)$-regular simplex $\Delta$ with side length one whose barycenter is at the origin, 
\item[(ii)] the remaining vertices $m+1$ and $m+2$ are mapped on the straight line that is orthogonal to $\Delta$ and passes through the origin, such that the distances from each vertex of $\Delta$ to $\bm{v} (m+1)$ and $\bm{v} (m+2)$ are both one and they are centrally symmetric. 
\end{itemize}
On the other hand, the edge weight  
\begin{equation*}
w(ij) = 
\renewcommand{\arraystretch}{1.7}
\begin{cases} 
\frac{2(m-2)}{m(m^2+m+2)}, & \text{if} \ 1 \leq i,j \leq m,\\
\frac{2}{m^2+m+2}, & \text{if} \ 1 \leq i \leq m, \ j = m+1 \ \text{or} \ m+2
\end{cases}
\end{equation*}
attains 
\begin{equation*}
\widehat{a}(G;{\bf 1},{\bf 1}) = \frac{2m}{m^2+m+2}.
\end{equation*}
\end{Proposition}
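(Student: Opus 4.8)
The plan is to establish optimality by exhibiting a primal--dual pair $(\bm{v},w)$ satisfying the complementary slackness conditions \eqref{KKT1}--\eqref{KKT2}, and then to prove uniqueness of the embedding by a separate extremal estimate. Since $s=\bm{1}$ we have $D=I$, so $L(G;(w,\bm{1}))=\sum_{ij\in E}w(ij)E_{ij}$. First I would check that the displayed weight $w$ is feasible for \eqref{eigen_opt}: because $l=\bm{1}$ this only requires $\sum_{ij\in E}w(ij)=1$, and counting the $\binom{m}{2}$ edges inside $K_{m}$ and the $2m$ edges joining $\{1,\dots,m\}$ to $\{m+1,m+2\}$ turns this into $\tfrac{(m-1)(m-2)}{m^{2}+m+2}+\tfrac{4m}{m^{2}+m+2}=1$. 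Next I would write the embedding explicitly: place $\bm{v}(1),\dots,\bm{v}(m)$ at the vertices of a regular $(m-1)$-simplex of side $1$ centred at the origin, which lies in an $(m-1)$-dimensional subspace $W\subset\mathbb{R}^{m}$ with circumradius $\rho$, $\rho^{2}=\tfrac{m-1}{2m}$; and set $\bm{v}(m+1)=h\bm{e}$, $\bm{v}(m+2)=-h\bm{e}$ where $\bm{e}$ is a unit vector spanning $W^{\perp}$ and $h^{2}=1-\rho^{2}=\tfrac{m+1}{2m}$. It is immediate that $\sum_{i\in V}\bm{v}(i)=0$ and that every edge has length exactly $1$, so $\bm{v}$ is feasible for \eqref{emb_opt}.

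Next I would verify \eqref{KKT1}--\eqref{KKT2}. Condition \eqref{KKT1} holds trivially since every $w(ij)$ is positive and every edge length is $1$. For \eqref{KKT2}, using $\sum_{j\le m}\bm{v}(j)=0$ and $\bm{v}(m+1)+\bm{v}(m+2)=0$, the left-hand side at a vertex $i\le m$ reduces to $(m w_{1}+2w_{2})\bm{v}(i)$ and at $i=m+1$ (or $m+2$) to $m w_{2}\,\bm{v}(i)$, where $w_{1}=\tfrac{2(m-2)}{m(m^{2}+m+2)}$ and $w_{2}=\tfrac{2}{m^{2}+m+2}$; both coefficients equal $\tfrac{2m}{m^{2}+m+2}$. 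So \eqref{KKT2} holds with $\lambda_{1}(L(G;(w,\bm{1})))=\tfrac{2m}{m^{2}+m+2}$, once one checks this is the \emph{first} nonzero eigenvalue: $L$ has eigenvalue $0$ with multiplicity one (as $G_{w}=G$ is connected), eigenvalue $\tfrac{2m}{m^{2}+m+2}$ with multiplicity at least $m$ (the columns of ${}^{t}(\bm{v}(1)\ \cdots\ \bm{v}(n))$ are corresponding eigenvectors), and since $\operatorname{tr}L=2\sum_{ij\in E}w(ij)=2$ the remaining eigenvalue must be $\tfrac{2(m+2)}{m^{2}+m+2}>\tfrac{2m}{m^{2}+m+2}$. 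By complementary slackness $\bm{v}$ and $w$ are both optimal; hence $\widehat{a}(G;\bm{1},\bm{1})=\tfrac{2m}{m^{2}+m+2}$ and the optimal value of \eqref{emb_opt} is $\sum_{i}\|\bm{v}(i)\|^{2}=\tfrac{m^{2}+m+2}{2m}$.

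For uniqueness I would show the value $\tfrac{m^{2}+m+2}{2m}$ is attained only by the configuration above. Let $\bm{v}$ be any feasible embedding and put $P=\sum_{i\le m}\|\bm{v}(i)\|^{2}$, $Q=\|\bm{v}(m+1)\|^{2}+\|\bm{v}(m+2)\|^{2}$, $\bar{u}=\tfrac1m\sum_{i\le m}\bm{v}(i)$; equilibrium gives $\bm{v}(m+1)+\bm{v}(m+2)=-m\bar{u}$. The identity $\sum_{i<j\le m}\|\bm{v}(i)-\bm{v}(j)\|^{2}=m(P-m\|\bar{u}\|^{2})$ with the distance bounds gives $P\le\tfrac{m-1}{2}+m\|\bar{u}\|^{2}$, and summing $\|\bm{v}(m+\alpha)-\bm{v}(i)\|^{2}\le1$ over $i\le m$, $\alpha\in\{1,2\}$ gives $mQ+2P+2m^{2}\|\bar{u}\|^{2}\le2m$. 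Eliminating $Q$ and then (using $m>2$, so the coefficient of $P$ is positive) $P$ yields
\[
\sum_{i\in V}\|\bm{v}(i)\|^{2}=P+Q\le\frac{m^{2}+m+2}{2m}-(m+2)\|\bar{u}\|^{2}.
\]
Equality forces $\bar{u}=0$, all $\binom{m}{2}$ edges of $K_{m}$ to have length $1$, and all $2m$ distances from $m+1,m+2$ to $K_{m}$ to equal $1$. The first two conditions make $\bm{v}(1),\dots,\bm{v}(m)$ a regular $(m-1)$-simplex of side $1$ centred at the origin (alternatively one may invoke Proposition~\ref{prop_completesubgraph} together with $\bar{u}=0$), spanning a subspace $W$ of dimension $m-1$; the set of points of $\mathbb{R}^{m}$ equidistant from all of them is the line $W^{\perp}$ through the origin, so the distance-$1$ condition pins $\bm{v}(m+1),\bm{v}(m+2)$ to $\pm h\bm{e}$, and equilibrium forces them to be $+h\bm{e}$ and $-h\bm{e}$. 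Up to a rotation about the origin this is the stated embedding, whose span has dimension $m=n-2$, in agreement with Theorem~\ref{edge complete}. The routine parts are the feasibility count and the two linear computations of the first two paragraphs; the real work is calibrating the extremal estimate --- which constraints to add and in which order to eliminate $P$, $Q$, $\|\bar{u}\|^{2}$ --- so that the bound is exactly $\tfrac{m^{2}+m+2}{2m}$ with a rigid equality locus. The hypothesis $m>2$ is essential: for $m=2$ the side of $K_{m}$ is not forced and the optimal embedding is not unique (cf.\ the Remark before this proposition).
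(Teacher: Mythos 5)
Your argument is correct, and it splits into a half that matches the paper and a half that does not. The optimality part is essentially the paper's: exhibit the embedding and the weight, check feasibility, and verify the complementary slackness conditions \eqref{KKT1}--\eqref{KKT2}; your computation $(mw_1+2w_2)=mw_2=\tfrac{2m}{m^2+m+2}$ is the same calculation the paper performs (there by solving for the symmetric ansatz $a,b$), and your trace argument confirming that $\tfrac{2m}{m^2+m+2}$ really is the \emph{first} nonzero eigenvalue of $L(G;(w,\bm 1))$ supplies a detail the paper leaves implicit. The uniqueness part is where you genuinely diverge. The paper pairs an arbitrary optimal embedding $\bm v^*$ with the already-found optimal weight $w$: since all weights are positive, \eqref{KKT1} forces every edge length to equal $1$, and \eqref{KKT2} at the vertices $m+1,m+2$ forces $\sum_{i\le m}\bm v^*(i)=0$ and, with the equilibrium constraint, $\bm v^*(m+1)+\bm v^*(m+2)=0$; Proposition~\ref{prop_completesubgraph} then yields the regular simplex and the stated configuration. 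You instead prove a self-contained extremal estimate $P+Q\le\tfrac{m^2+m+2}{2m}-(m+2)\|\bar u\|^2$ for every feasible embedding, whose equality case is rigid; I checked the two inequalities and the elimination (the coefficient $\tfrac{m-2}{m}>0$ is exactly where $m>2$ enters), and they are right. The paper's route is shorter once strong duality and the dual optimum are in hand, and it is the template reused verbatim for Proposition~\ref{prop_sol_G(m,k)}; your route is more elementary for the uniqueness step (it does not need the fact that every optimal primal must satisfy complementary slackness with a fixed optimal dual), re-derives the optimal value $\tfrac{m^2+m+2}{2m}$ on the primal side, and even quantifies the suboptimality caused by $\bar u\neq 0$. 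One small caution: in the ambient $\mathbb{R}^n$ the locus of points at distance $1$ from all vertices of $\Delta$ is a $2$-sphere in the $3$-dimensional orthogonal complement of $\mathrm{span}\,\Delta$, not a line, so the conclusion is uniqueness up to a rotation about the origin --- which is exactly the convention the paper adopts, so this is a phrasing issue rather than a gap.
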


\begin{proof}
The embedding $\bm{v}: V \to \mathbb{R}^n$ as (i) and (ii) is obviously feasible, in particular, $\| \bm{v}(i) -\bm{v}(j) \|^2 = 1$ is satisfied for any edge $ij$.
Hence, if the condition \eqref{KKT2} holds for $\bm{v}$ and some feasible weight, these variables are optimal respectively.
By taking the symmetry of the graph into account, let
\begin{equation*}
w(ij) = 
\begin{cases} 
a \geq 0, & \text{if} \ 1 \leq i,j \leq m,\\
b \geq 0, & \text{if} \ 1 \leq i \leq m, \ j = m+1 \ \text{or} \ m+2.
\end{cases}
\end{equation*}
be a feasible weight, that is, $w$ satisfies 
\begin{equation*}
\frac{m(m-1)}{2}a +2mb = 1.
\end{equation*} 
Then, by inserting $w$ and $\bm{v}$ into the condition \eqref{KKT2}:
\begin{equation*}
\sum_{k:ik \in E} w(ik) \left( \bm{v}(i) -\bm{v}(k) \right) = \lambda_1 (L(G;(w,\bm{1}))) \, \bm{v}(i), \quad \forall i \in V,
\end{equation*}
we have 
\begin{equation*}
a = \frac{2(m-2)}{m(m^2+m+2)}, \quad b = \frac{2}{m^2+m+2} 
\end{equation*}
Therefore, $\bm{v}$ and the obtained weight $w$ are optimal solutions, respectively. 
For this weight the first nonzero eigenvalue is
$$\lambda_1 (L(G;(w,\bm{1}))) = \frac{2m}{m^2+m+2},$$
which is optimal value of the problem \eqref{eigen_opt}.

We give an arbitrary optimal embedding $\bm{v}^*: V \to \mathbb{R}^n$.
By inserting $w$ and $\bm{v}^*$ into the conditions \eqref{KKT1} and \eqref{KKT2}, we obtain
\begin{align*}
\| \bm{v}^*(i) -\bm{v}^*(j) \|^2 = 1, \quad \forall ij \in E,\\
\sum_{1 \leq i \leq m} \bm{v}^*(i) = 0, \\
\bm{v}^*(m+1) +\bm{v}^*(m+2) = 0.
\end{align*}
Therefore, by combining these with Proposition \ref{prop_completesubgraph}, the embedding $\bm{v}^*$ is exactly (i) and (ii).
\end{proof}

\begin{Remark}
Since the embedding as (i) and (ii) is a unique optimal solution, we have
\begin{equation*}
\mathrm{rotdim}_{K_n \setminus \{ e \}} (\bm{1},\bm{1}) = n-2, \quad \text{if} \ n \geq 5.
\end{equation*}
Although the graph $K_4 \setminus \{ e \}$ can be also embedded as (i) and (ii), when $m(=n-2)=2$, an optimal weight having zero values does not bring the uniqueness of an optimal embedding. 
\end{Remark}

\section{Rotational dimension of a chordal graph}

By Theorems \ref{rotdim-twbound}, \ref{minor rotdim} and Proposition \ref{rotdim-completegraph}, we see that the rotational dimension is bounded by the tree-width $\text{tw} (G)$ or the clique number $\omega (G)$:
\begin{equation}\label{rotdimbound}
\omega (G) -1 \leq \mathrm{rotdim} (G) \leq \text{tw} (G)+1.
\end{equation}
These bounds are tight for a chordal graph. 
A {\it chordal graph} is a graph in which all cycles of length $4$ or more have a chord that is an edge connecting two nonadjacent vertices of this cycle.
For an arbitrary graph $G$ the inequality $\mathrm{tw} (G) \geq \omega (G) -1$ holds, however, if $G$ is a chordal graph, then it becomes the equality (see, e.g. \cite{Bodlaender}) and so \eqref{rotdimbound} becomes
$$\omega (G) -1 \leq \mathrm{rotdim} (G) \leq \omega (G).$$
Applying the properties of a chordal graph, we calculate the rotational dimension of some large graphs.
\begin{Proposition}\label{chordal_rotdim}
Let $G$ be a chordal graph that satisfies $\mathrm{rotdim} (G) = \omega (G)$. 
Also, let $\widehat{G}$ be a chordal graph containing $G$ as a subgraph. 
If $\omega (\widehat{G}) = \omega (G)$, then
$$\mathrm{rotdim} (\widehat{G}) = \mathrm{rotdim} (G).$$
\end{Proposition}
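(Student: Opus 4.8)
The plan is to show that $\mathrm{rotdim}(\widehat G)$ is squeezed to the single value $\omega(G)$. Since $\widehat G$ is chordal with $\omega(\widehat G)=\omega(G)$, the sandwich $\omega(\widehat G)-1\le \mathrm{rotdim}(\widehat G)\le \omega(\widehat G)$ recorded just above the statement applies; so everything reduces to ruling out the lower possibility, and the hypothesis $\mathrm{rotdim}(G)=\omega(G)$ is precisely what does this.

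First I would prove the lower bound $\mathrm{rotdim}(\widehat G)\ge \mathrm{rotdim}(G)$. Because $G$ is a subgraph of $\widehat G$, it is in particular a minor of $\widehat G$: starting from $\widehat G$, delete every edge not in $E(G)$, and then delete every vertex not in $V(G)$ (each such vertex is isolated once its incident edges have been removed). Hence $\widehat G\succeq G$, and Theorem \ref{minor rotdim} yields $\mathrm{rotdim}(\widehat G)\ge \mathrm{rotdim}(G)=\omega(G)$.

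Next I would prove the matching upper bound. Since $\widehat G$ is chordal we have $\mathrm{tw}(\widehat G)=\omega(\widehat G)-1$, so Theorem \ref{rotdim-twbound} gives $\mathrm{rotdim}(\widehat G)\le \mathrm{tw}(\widehat G)+1=\omega(\widehat G)=\omega(G)$. Combining the two bounds we obtain $\mathrm{rotdim}(\widehat G)=\omega(G)=\mathrm{rotdim}(G)$, as claimed.

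I do not expect a genuine obstacle here: the argument is just the interplay of minor monotonicity (Theorem \ref{minor rotdim}), the tree-width bound (Theorem \ref{rotdim-twbound}), and the chordal identity $\mathrm{tw}=\omega-1$, all already available from earlier in the paper. The only points worth a sentence of care are that a subgraph is indeed a minor, and that the hypothesis $\mathrm{rotdim}(G)=\omega(G)$ is exactly the ingredient needed to lift $\mathrm{rotdim}(\widehat G)$ to the top of the chordal sandwich.
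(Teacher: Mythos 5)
Your argument is correct and is exactly the one the paper intends: the lower bound comes from minor monotonicity (Theorem \ref{minor rotdim}, since a subgraph is a minor) together with the hypothesis $\mathrm{rotdim}(G)=\omega(G)$, and the upper bound is the chordal sandwich $\mathrm{rotdim}(\widehat G)\le \mathrm{tw}(\widehat G)+1=\omega(\widehat G)=\omega(G)$ already recorded in \eqref{rotdimbound}. The paper's own proof is just a one-line appeal to these facts, so you have simply spelled out the same reasoning in full.
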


\begin{proof}
It is clearly to ensure that by the minor monotonicity of the rotational dimension.
\end{proof}
By using the technique in this proposition, rotational dimensions can be determined for graphs $K_{m+1} \oplus K_{m+1} \oplus \cdots \oplus K_{m+1}$ with the common clique $K_{m}$.
When $s = \bm{1}$ and $l = \bm{1}$, we observe that optimal embeddings and minimal dimensions.
\begin{Proposition}\label{prop_sol_G(m,k)}
Let $G(m,k)$ be the $k$-clique sum $K_{m+1} \oplus K_{m+1} \oplus \cdots \oplus K_{m+1}$ with the common clique $K_{m}$, that is, $G(m,k)$ is the graph of order $m+k$ in which $m$ vertices $1, \cdots , m$ form $K_m$ and each of the remaining $k$ vertices $m+1, \cdots , m+k$ is adjacent to all the vertices of $K_m$, where $m>k \geq 2$.
If $s = \bm{1}$ and $l = \bm{1}$, then an optimal embedding $\bm{v}: V \to \mathbb{R}^{m+1}$ is obtained as follows:
\begin{itemize}
\item[(i)] the vertices of $K_{m}$ are mapped bijectively onto the vertices of the $(m-1)$-regular simplex $\Delta$ with side length one whose barycenter is at the origin, 
\item[(ii)] the remaining vertices $m+1, \cdots , m+k$ are mapped on the circle with radius $r = \sqrt{(m+1)/2m}$ that is orthogonal to $\Delta$ (denote this circle as $\{ (x, y, 0, \cdots , 0) \in \mathbb{R}^{m+1} \mid x^2+y^2= r^2 \}$) as follows. \\
If $k$ is even, 
\begin{equation*}
\bm{v} (m+i) = 
\renewcommand{\arraystretch}{1.7}
\begin{cases}
(r, 0, \cdots , 0), & \text{if} \ i \ \text{is odd}, \\ 
(-r, 0, \cdots , 0), & \text{if} \ i \ \text{is even}. \\ 
\end{cases}
\end{equation*} 
If $k$ is odd, 
\begin{eqnarray*}
\bm{v} (m+1) &=& (r, 0, \cdots , 0), \\
\bm{v} (m+1+i) &=& 
\renewcommand{\arraystretch}{1.7}
\begin{cases}
(-\frac{r}{k-1}, \sqrt{r^2-(\frac{r}{k-1})^2}, 0, \cdots , 0), & \text{if} \ i  \ \text{is odd}, \\ 
(-\frac{r}{k-1}, -\sqrt{r^2-(\frac{r}{k-1})^2}, 0, \cdots , 0), & \text{if} \ i  \ \text{is even}. \\ 
\end{cases}
\end{eqnarray*} 
\end{itemize}
Moreover, this embedding attains 
\begin{equation*}
\mathrm{rotdim}_{G(m,k)} (\bm{1},\bm{1}) = 
\renewcommand{\arraystretch}{1.7}
\begin{cases}
m, & \text{if} \ k \ \text{is odd}, \\ 
m+1, & \text{if} \ k \ \text{is even}. \\ 
\end{cases}
\end{equation*}
\indent
On the other hand, the edge weight  
\begin{equation*}
w(ij) = 
\renewcommand{\arraystretch}{1.7}
\begin{cases} 
\frac{2(m-k)}{m(m^2+(k-1)m+k)}, & \text{if} \ 1 \leq i,j \leq m,\\
\frac{2}{m^2+(k-1)m+k}, & \text{if} \ 1 \leq i \leq m, \ m+1 \leq j \leq m+k
\end{cases}
\end{equation*}
attains
\begin{equation*}
\widehat{a}(G;{\bf 1},{\bf 1}) = \frac{2m}{m^2+(k-1)m+k}.
\end{equation*}
\end{Proposition}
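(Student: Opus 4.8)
The plan is to mimic the proof of Proposition~\ref{prop_sol_Kn-e}: produce the embedding $\bm{v}$ together with a candidate weight $w$, note that \eqref{KKT1} is vacuous because every edge of $G(m,k)$ is realised at its prescribed length $1$, verify \eqref{KKT2} by a short symmetry computation, and then invoke the primal--dual optimality criterion to conclude that $\bm{v}$ and $w$ are both optimal --- which at the same time reads off $w$ and $\widehat{a}(G;\bm{1},\bm{1})=\lambda_1(L(G;(w,\bm{1})))$. For feasibility of $\bm{v}$: the $\binom{m}{2}$ edges inside $\Delta$ have length $1$ by construction; the circumradius $\rho$ of a regular $(m-1)$-simplex of side $1$ satisfies $\rho^2=(m-1)/(2m)$, so since the circle of radius $r=\sqrt{(m+1)/2m}$ lies in the $2$-dimensional orthogonal complement of $\mathrm{span}\,\Delta$ through the barycenter of $\Delta$, every point of that circle is at distance $\sqrt{\rho^2+r^2}=1$ from each vertex of $\Delta$, and the $mk$ cross-edges have length $1$. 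Finally $\sum_{i=1}^{m}\bm{v}(i)=0$ and, in either parity of $k$, the listed images of $m+1,\dots,m+k$ sum to $0$ (for odd $k$ the first coordinates telescope as $r-(k-1)\cdot\frac{r}{k-1}=0$ and the second coordinates cancel in pairs), so the equilibrium constraint holds.

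To run \eqref{KKT2}, using the symmetry of $G(m,k)$ I would look for a tight weight $w\equiv a$ on the $\binom{m}{2}$ edges of $K_m$ and $w\equiv b$ on the $mk$ cross-edges, so that $\binom{m}{2}\,a+mk\,b=1$. Substituting $\bm{v}$ into \eqref{KKT2} and using $\sum_{j\le m}\bm{v}(j)=0$ and $\sum_{l}\bm{v}(m+l)=0$, the left-hand side collapses to $(am+bk)\,\bm{v}(i)$ at a vertex $i\le m$ and to $bm\,\bm{v}(m+l)$ at an outer vertex, so \eqref{KKT2} becomes the pair $\lambda_1=am+bk=bm$. Solving this together with $\binom{m}{2}\,a+mk\,b=1$ gives exactly the displayed $a$, $b$ and $\lambda_1=\widehat{a}(G;\bm{1},\bm{1})=2m/(m^2+(k-1)m+k)$ (that $\lambda_1(L(G;(w,\bm{1})))$ really equals this value is seen as in Proposition~\ref{prop_sol_Kn-e}). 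Note $a\ge 0$ holds precisely because $m\ge k$, so the hypothesis $m>k$ makes $w$ feasible and, in fact, strictly positive --- which is what lets us control \emph{every} optimal embedding in the next step.

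For the rotational dimension, first note that for the exhibited $\bm{v}$ the span splits orthogonally as $\mathrm{span}\,\Delta\ \oplus\ \mathrm{span}\{\bm{v}(m+l)\mid 1\le l\le k\}$, of dimensions $m-1$ and --- according to the case split in (ii) --- either $1$ (outer points collinear) or $2$ (outer points spanning the plane of the circle). To see that no optimal embedding has smaller dimension, let $\bm{v}^*$ be any optimal embedding and pair it with the optimal weight $w$ above; by complementary slackness the two satisfy \eqref{KKT1} and \eqref{KKT2}, and since $w>0$, condition \eqref{KKT1} forces $\|\bm{v}^*(i)-\bm{v}^*(j)\|=1$ on every edge. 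Proposition~\ref{prop_completesubgraph} then identifies the image of $K_m$ with a regular $(m-1)$-simplex, while \eqref{KKT2} at the outer vertices forces $\sum_{i\le m}\bm{v}^*(i)=0$, i.e.\ the barycenter of that simplex is the origin, so its image spans an $(m-1)$-dimensional linear subspace $W$. Each outer vertex, being at distance $1$ from all $m$ vertices of the simplex, is equidistant from them, hence its $W$-component equals the circumcenter $=0$; thus it lies in $W^{\perp}$ with norm $r$. Consequently $\dim\mathrm{span}\{\bm{v}^*(i)\mid i\in V\}=(m-1)+\dim\mathrm{span}\{\bm{v}^*(m+l)\mid 1\le l\le k\}\ge m$; moreover the equilibrium constraint restricted to $W^{\perp}$ forces $\sum_{l}\bm{v}^*(m+l)=0$, and a set of equal-norm nonzero vectors summing to $0$ can lie on a single line through the origin only if it has evenly many members, so for odd $k$ those points span a $2$-plane and $\dim\mathrm{span}\{\bm{v}^*(i)\}\ge m+1$. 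Comparing with the exhibited embedding shows the latter is dimension-minimal, which yields the stated value of $\mathrm{rotdim}_{G(m,k)}(\bm{1},\bm{1})$.

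I expect only the last paragraph to require care: passing from ``all edges have length $1$'' plus Proposition~\ref{prop_completesubgraph} to a rigid description of $\bm{v}^*$ as (i)+(ii) up to rotation, and the small parity argument deciding whether the $k$ outer points fit on one line. The verification of \eqref{KKT2} and the solution for $a$, $b$, $\widehat{a}$ are the same symmetric bookkeeping as in Proposition~\ref{prop_sol_Kn-e} and hold no surprises.
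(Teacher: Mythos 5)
Your proposal is correct and follows essentially the same route as the paper's proof: exhibit the embedding, check feasibility, make the symmetric ansatz $w\equiv a$ on $E(K_m)$ and $w\equiv b$ on the cross-edges, solve \eqref{KKT2} together with the tight length constraint to get $a$, $b$ and $\lambda_1=2m/(m^2+(k-1)m+k)$, and then plug the strictly positive optimal $w$ (here $m>k$ is used, as you note) and an arbitrary optimal $\bm{v}^*$ back into \eqref{KKT1}--\eqref{KKT2} to pin down every optimal embedding via Proposition~\ref{prop_completesubgraph}. In fact your last paragraph (orthogonal splitting off of $\mathrm{span}\,\Delta$, the circumcenter argument placing the outer vertices at norm $r$ in $W^{\perp}$, and the parity obstruction to putting an odd number of equal-norm vectors summing to zero on a line) is more explicit than the paper, which compresses this step into one sentence. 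One caveat: the values you derive are $m$ for $k$ even and $m+1$ for $k$ odd, which is the opposite of the parity labels displayed in the Proposition; the displayed labels appear to be a typo, since the exhibited embedding itself spans dimension $m+1$ exactly when $k$ is odd and the subsequent Theorem's proof uses $\mathrm{rotdim}_{G(m,3)}(\bm{1},\bm{1})=m+1$, so your conclusion (not the literal display) is the consistent one --- just be aware that you are not literally proving ``the stated value'' as written.
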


\begin{proof}
For the embedding $\bm{v}: V \to \mathbb{R}^n$ as (i) and (ii), the condition \eqref{KKT1} holds because of  
$$\| \bm{v}(i) -\bm{v}(j) \| = 1 \quad \forall ij \in E.$$
By the discussion in a similar way to one in Proposition \ref{prop_sol_Kn-e}, we show that $\bm{v}$ is optimal, and obtain an optimal weight $w: E \to \mathbb{R}_{\geq 0}$ as 
\begin{equation*}
w(ij) = 
\renewcommand{\arraystretch}{1.7}
\begin{cases} 
\frac{2(m-k)}{m(m^2+(k-1)m+k)}, & \text{if} \ 1 \leq i,j \leq m,\\
\frac{2}{m^2+(k-1)m+k}, & \text{if} \ 1 \leq i \leq m, \ m+1 \leq j \leq m+k.
\end{cases}
\end{equation*}
\indent
By inserting the weight $w$ and any optimal embedding $\bm{v}^*$ into the conditions \eqref{KKT1} and \eqref{KKT2}, we obtain
\begin{align*}
\| \bm{v}^*(i) -\bm{v}^*(j) \|^2 = 1, \quad \forall ij \in E,\\
\sum_{1 \leq i \leq m} \bm{v}^*(i) = 0, \\
\sum_{m+1 \leq i \leq m+k} \bm{v}^*(i) = 0.
\end{align*}
Hence, it is shown that an optimal embedding as (i) and (ii) attains the minimal dimension of optimal embeddings, i.e., we obtain
\begin{equation*}
\mathrm{rotdim}_{G(m,k)} (\bm{1},\bm{1}) = 
\renewcommand{\arraystretch}{1.7}
\begin{cases}
m, & \text{if} \ k \ \text{is odd}, \\ 
m+1, & \text{if} \ k \ \text{is even}. 
\end{cases}
\end{equation*}
\end{proof}
By Proposition \ref{chordal_rotdim} and \ref{prop_sol_G(m,k)} the rotational dimension of $G(m,k)$ is determined as follows.
\begin{Theorem}
$$\mathrm{rotdim} (G(m,k)) = m+1, \quad \forall m \geq 4, \forall k \geq 3.$$
\end{Theorem}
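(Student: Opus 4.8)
The plan is to trap $\mathrm{rotdim}(G(m,k))$ between $m+1$ and $m+1$, using the two-sided estimate \eqref{rotdimbound} for chordal graphs on one side and the explicit even-case computation of Proposition~\ref{prop_sol_G(m,k)} on the other. Note first that $G(m,k)$, being an iterated clique sum of copies of $K_{m+1}$ along a common $K_m$, is chordal, and that its clique number is $\omega(G(m,k))=m+1$: the common clique $K_m$ together with any single vertex $m+i$ is a clique of size $m+1$, and no two of the vertices $m+1,\dots,m+k$ are adjacent, so nothing larger occurs. Hence, as recalled just before \eqref{rotdimbound}, $\mathrm{tw}(G(m,k))=\omega(G(m,k))-1=m$, and by Theorem~\ref{rotdim-twbound} we get the upper bound $\mathrm{rotdim}(G(m,k))\le m+1$. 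This half works for all $m\ge 1$, $k\ge 1$; it is only the matching lower bound that needs the hypotheses.

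For the lower bound I would reduce everything to the smallest even instance $G(m,2)$. Deleting the vertices $m+3,\dots,m+k$ together with their incident edges exhibits $G(m,2)$ as an (induced) subgraph, and in particular a minor, of $G(m,k)$ — this uses only $k\ge 3$. Since $m\ge 4>2$, the hypothesis $m>k$ of Proposition~\ref{prop_sol_G(m,k)} is satisfied for $G(m,2)$ with the even parameter $k=2$, so that proposition gives $\mathrm{rotdim}_{G(m,2)}(\bm 1,\bm 1)=m+1$, whence $\mathrm{rotdim}(G(m,2))\ge m+1$ directly from the definition of the rotational dimension. Minor monotonicity (Theorem~\ref{minor rotdim}) then yields $\mathrm{rotdim}(G(m,k))\ge \mathrm{rotdim}(G(m,2))\ge m+1$. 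Combining this with the upper bound of the previous paragraph proves $\mathrm{rotdim}(G(m,k))=m+1$. Equivalently, one can phrase the lower bound through Proposition~\ref{chordal_rotdim} itself: $G(m,2)$ is chordal with $\mathrm{rotdim}(G(m,2))=m+1=\omega(G(m,2))$, it is contained in the chordal graph $G(m,k)$, and $\omega(G(m,2))=\omega(G(m,k))$, so $\mathrm{rotdim}(G(m,k))=\mathrm{rotdim}(G(m,2))=m+1$.

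I do not expect a genuine obstacle here: all the real work is already packaged in Proposition~\ref{prop_sol_G(m,k)}. The only points requiring a little care are the two bookkeeping facts — that $G(m,k)$ is chordal and that $\omega(G(m,k))=m+1$ — and the verification that passing to the auxiliary graph $G(m,2)$ keeps us inside the range $m>k\ge 2$ where Proposition~\ref{prop_sol_G(m,k)} is available; this is exactly where the assumption $m\ge 4$ is used, while $k\ge 3$ is needed only so that restricting to $G(m,2)$ is a nontrivial passage (for $k=2$ the statement is Proposition~\ref{prop_sol_G(m,k)} itself, and whenever $k$ is even with $m>k$ the conclusion can be read off from that proposition without any reduction).
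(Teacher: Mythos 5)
Your upper bound (chordality of $G(m,k)$, $\omega(G(m,k))=m+1$, $\mathrm{tw}=\omega-1=m$, Theorem~\ref{rotdim-twbound}) is correct and is the same as in the paper. The lower bound, however, rests on a false step. You read the final display of Proposition~\ref{prop_sol_G(m,k)} literally and conclude $\mathrm{rotdim}_{G(m,2)}(\bm{1},\bm{1})=m+1$, hence $\mathrm{rotdim}(G(m,2))\ge m+1$. This contradicts the paper itself: $G(m,2)=K_{m+2}\setminus\{e\}$, and Theorem~\ref{edge complete} (together with the paper's closing remark) gives $\mathrm{rotdim}(G(m,2))=m$. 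The parity cases in that display are evidently swapped by a typo: the explicit optimal embedding described in Proposition~\ref{prop_sol_G(m,k)} puts the $k$ outer vertices at the two antipodal points $(\pm r,0,\dots,0)$ when $k$ is even, so it spans only $(m-1)+1=m$ dimensions, whereas for odd $k$ the outer vertices must use two coordinates of the circle, since an odd number of points of norm exactly $r$ on a line cannot sum to zero; thus the uniform-parameter value is $m$ for even $k$ and $m+1$ for odd $k$. Consequently your reduction to $G(m,2)$ cannot be repaired, and your side remark that for even $k<m$ the conclusion ``can be read off without any reduction'' is also wrong. Indeed, if your lower bound were correct the theorem would extend to $k=2$, contradicting $\mathrm{rotdim}(G(m,2))=m$; the hypothesis $k\ge 3$ should have been a warning sign.

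The paper's proof uses the odd case instead: for $m\ge 4>3$, Proposition~\ref{prop_sol_G(m,k)} (correctly read) gives $\mathrm{rotdim}_{G(m,3)}(\bm{1},\bm{1})=m+1$, hence $\mathrm{rotdim}(G(m,3))\ge m+1$, which combined with the chordal upper bound yields $\mathrm{rotdim}(G(m,3))=m+1=\omega(G(m,3))$; then Proposition~\ref{chordal_rotdim} (equivalently, minor monotonicity) transfers this to $G(m,k)$ for every $k\ge 4$, since $G(m,k)$ is chordal, contains $G(m,3)$, and has the same clique number. Replacing your auxiliary graph $G(m,2)$ by $G(m,3)$ --- which is exactly where the hypothesis $k\ge 3$ enters --- repairs your argument and reproduces the paper's proof.
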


\begin{proof}
The graph $G(m,3)$ is the chordal graph whose clique number is $m+1$.
Thus by Proposition \ref{prop_sol_G(m,k)}, 
$$m+1 = \mathrm{rotdim}_{G(m,3)} (\bm{1},\bm{1}) \leq \mathrm{rotdim} (G(m,3)) \leq m+1$$ 
holds, where $m \geq 4$.
Then we have 
$$\mathrm{rotdim} (G(m,3)) = m+1.$$
For any $k \geq 4$ the graph $G(m,k)$, which is also chordal, has $G(m,3)$ as a subgraph.
Since the clique number is constant, the proof is completed by Proposition \ref{chordal_rotdim}.
\end{proof}
Finally, note that $G(m,1) = K_{m+1}$ and $G(m,2) = K_{m+2} \setminus \{ e \}$, then we have
$$\mathrm{rotdim} (G(m,k)) = m, \quad \text{for} \ k = 1, 2.$$


\begin{thebibliography}{99} 
\bibitem{BoydVandenberghe} S.~Boyd and L.~Vandenberghe, {\it Convex optimization,} Cambridge University Press (Cambridge, 2004).
\bibitem{Fiedler} M.~Fiedler, Laplacian of graphs and algebraic connectivity, {\it Combinatorics and Graph Theory.} {\bf 25} (1989) 57--70.
\bibitem{Goring} F.~G\"{o}ring, C.~Helmberg and M.~Wappler, Embedded in the shadow of the separator, {\it SIAM J.~Optim.} {\bf 19} (2008) 472--501.
\bibitem{Goring2} F.~G\"{o}ring, C.~Helmberg and M.~Wappler, The rotational dimension of a graph, {\it J.~Graph Theory.} {\bf 66} (2011) 283--302.
\bibitem{Bodlaender} H.~L.~Bodlaender, A partial k-arboretum of graphs with bounded treewidth, {\it Theoret. Comput. Sci.} {\bf 209}(1-2) (1998) 1--45. 
\bibitem{Holst} H.~Van der Holst, M.~Laurent and A.~Schrijver, On a Minor-Monotone Graph Invariant, {\it J. Comb. Theory.} {\bf 65}(2) (1995) 291--304.
\bibitem{Holst2} H.~Van der Holst, L.~Lov\'{a}sz and A.~Schrijver, The Colin de Verdi\`{e}re graph parameter, in {\it Graph Theory and Combinatorial Biology,} Bolyai Soc. Math. Stud. 7, J\'{a}nos Bolyai Math. Soc. (Budapest, 1999), 29--85.
\end{thebibliography}
\end{document}